\pgfplotsset{compat=1.17}
\newtheorem{definition}{Definition}
\newtheorem{lemma}{Lemma}
\newtheorem{corollary}{Corollary}
\newtheorem{theorem}{Theorem}
\newtheorem{example}{Example}
\title{The $e$-Partitioning Principle of False Discovery Rate Control\footnote{This paper has been subsumed into the merged work \citet{xu2025bringing}. Please read and cite that paper instead of this one.}
}
\author{Jelle Goeman\footnote{Leiden University Medical Center, Leiden, The Netherlands} \and Rianne de Heide\footnote{University of Twente, Enschede, The Netherlands} \and Aldo Solari\footnote {Ca' Foscari University of Venice, Venice, Italy}}
\begin{document}
\maketitle

\begin{abstract}
We present a novel necessary and sufficient principle for False Discovery Rate (FDR) control. This $e$-Partitioning Principle says that a procedure controls FDR if and only if it is a special case of a general $e$-Partitioning procedure. By writing existing methods as special cases of this procedure, we can achieve uniform improvements of these methods, and we show this in particular for the eBH, BY and Su methods. We also show that methods developed using the $e$-Partitioning Principle have several valuable properties. They generally control FDR not just for one rejected set, but simultaneously over many, allowing post hoc flexibility for the researcher in the final choice of the rejected hypotheses. Under some conditions, they also allow for post hoc adjustment of the error rate, choosing the FDR level $\alpha$ post hoc, or switching to familywise error control after seeing the data. In addition, $e$-Partitioning allows FDR control methods to exploit logical relationships between hypotheses to gain power.
\end{abstract}

\section{Introduction}

For familywise error rate (FWER) control, it has long been known \citep{Sonnemann1982, Sonnemann2008} that there is a single universal principle that is necessary and sufficient for the construction of valid methods. This Closure Principle says that any method that controls FWER is a special case of a closed testing procedure \citep{marcus1976closed}. The Closure Principle was later challenged by the seemingly more powerful Partitioning Principle \citep{finner2002partitioning}, but \citet{goeman2021only} have shown that the two principles are equivalent. \citet{genovese2006exceedance} and \citet{goeman2011multiple} have extended closed testing to control of False Discovery Proportions (FDPs), and \citet{goeman2021only} showed that all methods controlling a quantile of the distribution of FDP are either equivalent to a closed testing procedure or are dominated by one, extending the Closure Principle to all methods controlling FDP. 

The Closure Principle is useful for method development in FWER and FDP in several ways. In the first place, it reduces the complex task of constructing a multiple testing method to the simpler task of choosing hypothesis tests for intersection hypotheses. After making this choice, method construction reduces to a discrete optimization problem. Moreover, the Closure Principle helps to handle complex situations such as restricted combinations, i.e., logical implications between hypotheses \citep{shaffer1986modified, goeman2021only}. Finally, methods constructed using closed testing often allow for some user flexibility, permitting researchers to modify the results of the multiple testing procedure post-hoc without compromising error control \citep{goeman2011multiple}. 

False Discovery Rate (FDR) methods control the expectation of FDP rather than a quantile \citep{benjamini1995controlling}. Such methods are therefore outside of the scope of the Closure Principle. For the construction of such methods, \citet{blanchard2008two} formulated two quite general sufficient conditions, self-consistency and dependence control, under which, if both hold, FDR control is guaranteed. This Self-Consistency Principle simplifies the proof of well-known FDR-controlling procedures such as BH \citep{benjamini1995controlling} and BY \citep{benjamini2001control} and has been seminal for the creation of many others. An important recent example is the eBH procedure \citep{wang2022false}, that controls FDR on the basis of per-hypothesis $e$-values rather than $p$-values. The concept of the $e$-value, a random variable with expectation at most 1 under the null hypothesis, tunes well with FDR since both concepts are expectation-based. Several other authors \citep{ignatiadis2024values, lee2024boosting, ren2024derandomised, ignatiadis2024compound} have pointed out useful connections between FDR and $e$-values.

However, the Self-Consistency Principle is sufficient but not necessary for FDR control, and there is some indication that it is not optimal. Exchangeable methods designed using the self-consistency principle control FDR at the more stringent level $\pi_0\alpha$ rather than at $\alpha$, but \citet{solari2017minimally} have shown that any method controlling FDR at $\pi_0\alpha$ can be uniformly improved by a method that controls FDR at $\alpha$, and that does not adhere to the Self-Consistency Principle. Some methods constructed using that principle, e.g., eBH and BY, have a reputation for low power \citep{lee2024boosting, xu2023more}. Furthermore, methods created using the Self-Consistency Principle lack some of the properties that methods created using the Closure Principle do get. Full post-hoc user flexibility, for example, as offered by closed testing \citep{goeman2011multiple}, is unknown in combination with FDR control, although some FDR methods allow some user flexibility \citep{lei2018adapt, lei2021general, katsevich2020simultaneous, katsevich2023filtering}. Moreover, so far no FDR-controlling methods have been proposed that can deal with restricted combinations. 

In this paper, we present a novel principle that is both necessary and sufficient for FDR control. We call this principle the $e$-Partitioning Principle of FDR control because of its similarity to the Partitioning Principle formulation of the Closure Principle for FWER and FDP, and because it is based on $e$-values.
Like the Closure Principle, the new $e$-Partitioning Principle facilitates the task of constructing a multiple testing procedure by reducing it to the simpler task of choosing appropriate $e$-values for specifically constructed partitioning hypotheses. Once these $e$-values are chosen, the multiple testing procedure reduces to a discrete optimization problem. Like the Closure Principle for FWER and FDP, the Partitioning Principle for FDR offers some post-hoc user flexibility in a natural way, and handles restricted combinations without effort. It can be used to formulate uniform improvements of existing methods, including the eBH and BY procedures and the procedure of \citet{su2018fdr}.

The outline of the paper is as follows. We first revisit the definition of FDR control, generalizing that concept to allow for simultaneity in the rejections. Next, we formulate the general $e$-Partitioning procedure and establish that its necessity and sufficiency: the actual $e$-Partitioning Principle. Sections \ref{sec: e-combining} and \ref{sec: p-combining} explore existing methods for FDR control and investigate whether they can be improved. Section \ref{sec: Shaffer} shows how the $e$-Partitioning Principle allows restricted combinations to be used in combination with FDR. In Sections \ref{sec: flexible} and \ref{sec: alpha} we establish different ways in which $e$-Partitioning brings flexibility to the researcher in terms of choice of error rates and rejected sets. We end with some numerical illustrations.

\section{Contributions}

The novel contributions of this paper are the following.

\begin{enumerate}
    \item We formulate the general $e$-Partitioning procedure and prove the $e$-Partitioning Principle the says that any method controls FDR if and only if it is a special case of that general procedure (Section \ref{sec: principle}).
    \item We generalize the concept of FDR control to allow methods to control FDR for more than one set simultaneously (Section \ref{sec: FDR}), and show that such simultaneity comes for free (without reducing $\alpha$-levels) as a consequence of the $e$-Partitioning Principle. We explore the uses of this simultaneity in Section \ref{sec: flexible}.
    \item We use the $e$-Partitioning Principle to construct substantial uniform improvements of the eBH, BY and Su methods. Our improvements always allow more flexibility inf the rejected sets, and often allow rejection of larger sets than the original methods at the same nominal FDR level (Sections \ref{sec: e-combining} and \ref{sec: p-combining}). Polynomial time algorithms for these methods are given (Section \ref{sec: computation}).
    \item We show how power can be gained in FDR control methods for the case of restricted combinations (Section \ref{sec: Shaffer}).
    \item We show two ways in which $e$-Partitioning allows researcher flexibility in the choice of error rates. In the first place, researchers may switch to FWER control, if they reject a smal enough set (Section \ref{sec: flexible}). Secondly, we show that for some FDR controlling methods, researchers may choose the $\alpha$-level at which FDR is controlled post hoc (Section \ref{sec: alpha}).
\end{enumerate}

\section{False Discovery Rate control} \label{sec: FDR}

Let us first define the criterion of FDR control. We will introduce a generalized form of the usual definition of FDR control that allows methods to claim FDR control for more than one set simultaneously. The resulting simultaneous control will be mathematically easier to work with. Moreover, it will allow some post hoc user flexibility, as we will explore further in Section \ref{sec: flexible}.

We will use a set-centered notation for the theory we develop. Throughout the paper, we will denote all sets with capital letters (e.g., $R$), collections of sets in calligraphic (e.g., $\mathcal{R}$), and scalars in lower case (e.g., $r$). We use the shorthand $[i]$ for $\{1,2,\ldots, i\}$. The power set of $R$ is $2^R$. Random variables and random sets will be in boldface (e.g., $\mathbf{e}$, $\mathbf{R}$). Direct inequalities between random variables should always be understood to hold surely, i.e., for all events, unless stated otherwise. We denote $\max(a,b)$ as $a \vee b$.

Let the statistical model $M$ be a set of probability measures, e.g., $M = \{\mathrm{P}_\theta\colon \theta \in \Theta\}$ in a parametric model. Hypotheses are restrictions to the model $M$, and therefore subsets of $M$. Suppose that we have $m$ hypotheses $H_1, \ldots, H_m \subseteq M$ of interest. For every $\mathrm{P} \in M$, some hypotheses are true and others false; let $N_\mathrm{P} = \{i\colon \mathrm{P} \in H_i\}$ be the index set of the true hypotheses for $\mathrm{P}$. 

If we choose to reject the hypotheses with indices in $R \subseteq [m]$, we say that we make $|R|$ discoveries, of which $|R \cap N_\mathrm{P}|$ are false.  The false discovery proportion (FDP) for $R$ and $\mathrm{P}$, therefore, is $|R \cap N_\mathrm{P}|/|R|$, if $R \neq \emptyset$, and conventionally defined as 0 if $R = \emptyset$. If $\mathbf{R}$ is random, then FDP is random. A random $\mathbf{R}$ controls the False Discovery Rate (FDR) at $\alpha$ if the expected FDP is bounded by $\alpha$. Formally, FDR control is defined in Definition \ref{def: classical FDR}. Let $\mathrm{E}_\mathrm{P}$ denote the expectation under $\mathrm{P}$.

\begin{definition}[Classical FDR control] \label{def: classical FDR}
$\mathbf{R} \subseteq [m]$ controls FDR at level $\alpha$ if, for every $\mathrm{P} \in M$,
\[
\mathrm{E}_\mathrm{P}\bigg(  \frac{|\mathbf{R} \cap N_\mathrm{P}|}{|\mathbf{R}| \vee 1} \bigg)  \leq \alpha.
\]
\end{definition}

In this paper, we prefer to work with a more general definition of FDR control, which refers to a random collection of sets, rather than to a single random set. In the classical Definition \ref{def: classical FDR}, a researcher rejecting all hypotheses with indices in the set $\mathbf{R}$ can expect FDP to be at most $\alpha$. In the simultaneous Definition \ref{def: FDR}, the researcher has a collection $\boldsymbol{\mathcal{R}}$ of such sets to choose from. 

\begin{definition}[Simultaneous FDR control] \label{def: FDR}
$\boldsymbol{\mathcal{R}} \subseteq 2^{[m]}$ controls FDR at level $\alpha$ if, for every $\mathrm{P} \in M$,
\[
\mathrm{E}_\mathrm{P}\bigg( \max_{R \in \boldsymbol{\mathcal{R}}} \frac{|R \cap N_\mathrm{P}|}{|R| \vee 1} \bigg)  \leq \alpha.
\]
\end{definition}

In the remainder of this paper, when we speak about FDR control at level $\alpha$, this will refer to Definition \ref{def: classical FDR} or \ref{def: FDR} depending on whether we speak about an index set or a collection of such sets. While our focus will be on simultaneous FDR control (Definition \ref{def: FDR}), our results will imply corresponding results for classical FDR control (Definition \ref{def: classical FDR}) through a duality between the two definitions, which is made explicit in Lemma \ref{lem: duality}. The proof of this lemma is immediate from the definitions.

\begin{lemma} \label{lem: duality}
If $\mathbf{R}$ controls FDR at $\alpha$, then so does $\boldsymbol{\mathcal{R}} = \{\mathbf{R}\}$;
if $\boldsymbol{\mathcal{R}}$ controls FDR  at $\alpha$, then so does any $\mathbf{R} \in \boldsymbol{\mathcal{R}}$.
\end{lemma}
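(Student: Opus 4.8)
The plan is to read off both implications directly from Definitions \ref{def: classical FDR} and \ref{def: FDR}: in each direction the key observation is a surely-valid pointwise inequality between the relevant FDP quantities, after which the conclusion follows by taking expectations under an arbitrary $\mathrm{P} \in M$. No combinatorial or probabilistic machinery is needed; the work is purely bookkeeping about what the maximum over a (possibly random) collection means outcome by outcome.

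For the first implication, assume $\mathbf{R}$ controls FDR at $\alpha$ in the sense of Definition \ref{def: classical FDR}, and take $\boldsymbol{\mathcal{R}} = \{\mathbf{R}\}$. For every outcome and every $\mathrm{P} \in M$ the maximum in Definition \ref{def: FDR} ranges over the single element $\mathbf{R}$, so
\[
\max_{R \in \boldsymbol{\mathcal{R}}} \frac{|R \cap N_\mathrm{P}|}{|R| \vee 1} = \frac{|\mathbf{R} \cap N_\mathrm{P}|}{|\mathbf{R}| \vee 1}.
\]
Applying $\mathrm{E}_\mathrm{P}$ to both sides and using the hypothesis gives the bound $\alpha$, which is exactly the condition of Definition \ref{def: FDR} for $\boldsymbol{\mathcal{R}}$.

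For the second implication, assume $\boldsymbol{\mathcal{R}}$ controls FDR at $\alpha$ and let $\mathbf{R} \in \boldsymbol{\mathcal{R}}$, i.e., $\mathbf{R}(\omega) \in \boldsymbol{\mathcal{R}}(\omega)$ for every outcome $\omega$. Then, pointwise and for every $\mathrm{P} \in M$, the value $|\mathbf{R} \cap N_\mathrm{P}|/(|\mathbf{R}| \vee 1)$ is one of the terms over which the maximum in Definition \ref{def: FDR} is taken, so
\[
\frac{|\mathbf{R} \cap N_\mathrm{P}|}{|\mathbf{R}| \vee 1} \leq \max_{R \in \boldsymbol{\mathcal{R}}} \frac{|R \cap N_\mathrm{P}|}{|R| \vee 1}.
\]
Monotonicity of $\mathrm{E}_\mathrm{P}$ then yields $\mathrm{E}_\mathrm{P}\bigl( |\mathbf{R} \cap N_\mathrm{P}| / (|\mathbf{R}| \vee 1) \bigr) \leq \alpha$ for every $\mathrm{P}$, which is Definition \ref{def: classical FDR}.

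I do not expect any genuine obstacle here; the lemma is essentially a restatement of the definitions. The only points deserving a line of care are that $\boldsymbol{\mathcal{R}}$ is itself random, so every comparison must be read as holding surely (outcome by outcome) before integrating, and that the phrase "$\mathbf{R} \in \boldsymbol{\mathcal{R}}$" is to be understood as a measurable selection rule, with measurability taken as part of what it means to call such an $\mathbf{R}$ a random set. With those conventions in place both halves are immediate.
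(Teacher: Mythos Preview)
Your proof is correct and matches the paper's approach exactly: the paper states that the lemma ``is immediate from the definitions'' and gives no further details, and you have simply written out those immediate steps carefully. The pointwise comparisons you make and the remark about interpreting $\mathbf{R} \in \boldsymbol{\mathcal{R}}$ outcome by outcome are precisely the bookkeeping the paper leaves implicit.
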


We remark that $\alpha$ is taken as fixed in Definitions \ref{def: classical FDR} and \ref{def: FDR}, which implies that it is not allowed to depend on the data. In Section \ref{sec: alpha} we will consider a further generalization of Definition \ref{def: FDR} that allows simultaneous FDR control over all $\alpha \in (0,1]$, and therefore random $\boldsymbol\alpha$.

\section{The $e$-Partitioning Principle of FDR control} \label{sec: principle}

The central result of this paper is the $e$-Partitioning Principle of FDR control. Analogous to the Partitioning Principle of FWER control \citep{finner2002partitioning}, this novel principle gives a general recipe for designing FDR control methods. In this section we show that this novel principle is both necessary and sufficient for FDR control. To distinguish the two Partitioning Principles, we will call the Partitioning Principle for FWER control the $p$-Partitioning Principle.

Both Partitioning Principles partition the model $M$ into (maximally) $2^m$ disjoint parts, defined by the equivalence classes of the collection of true hypotheses $N_\mathrm{P}$. For every $S \in 2^{[m]}$, the partitioning hypothesis corresponding to $S$ is
\[
H_S = \{\mathrm{P} \in M\colon N_\mathrm{P} = S\}.
\]
Since the partitioning hypotheses are defined through an equivalence class on $M$, they are disjoint and cover $M$. As a consequence, exactly one among $H_S$, $S \in 2^{[m]}$, is true and all the others are false. We will generally ignore $H_\emptyset$, since it is impossible to make false discoveries if $H_\emptyset$ is true; define $\mathcal{M} = 2^{[m]} \setminus \{\emptyset\}$ as the collection indexing the partitioning hypotheses of interest.

To adapt the Partitioning Principle for FDR control, we combine partitioning with the concept of the $e$-value, building on earlier work \citep{wang2022false, ignatiadis2024compound} that demonstrates a close relationship between FDR control and $e$-values. In particular, \citet[Theorem~6.5]{ignatiadis2024compound} prove that every FDR controlling procedure can be recovered by eBH applied to compound e-values. However, their result is not constructive. 

We call $\mathbf{e}$ an $e$-value for a hypothesis $H$ if $\mathbf{e} \geq 0$ and $\mathrm{E}_\mathrm{P}(\mathbf{e}) \leq 1$ for all $\mathrm{P} \in H$. Hypothesis testing may be based on $e$-values rather than on $p$-values \citep{shafer2021testing,ramdas2023game,grunwals2024safe}. For a single hypothesis $H$, we may reject $H$ when $\mathbf{e} \geq 1/\alpha$, while controlling Type I error at level $\alpha$, since for $\mathrm{P} \in H$, by Markov's inequality,
\[
\mathrm{P}(\mathbf{e} \geq 1/\alpha) \leq \frac{\mathrm{E}_\mathrm{P}(\mathbf{e})}{1/\alpha} \leq \alpha.
\]

Combining $e$-values and partitioning hypotheses, let $\mathbf{e}_S$, for every $S \in \mathcal{M}$, be an $e$-value for the partitioning hypothesis $H_S$. Together, all $e$-values for all partitioning hypotheses form a \emph{suite} of $e$-values $\mathbf{E} = (\mathbf{e}_S)_{S \in \mathcal{M}}$. Choosing $e$-values for partitioning hypotheses may seem like a complex task since partitioning hypotheses may look unusual. However, Lemma \ref{lem: closure} shows that partitioning $e$-values can simply be constructed as compound $e$-values for intersection hypotheses, allowing the literature on this subject to be exploited \citep{vovk2021values, vovk2024merging, wang2024only}. Often, there is no loss in power doing so, but Section \ref{sec: Shaffer} will discuss situations for which there is a meaningful difference between partitioning and intersection hypotheses.

\begin{lemma} \label{lem: closure}
If $\mathbf{e}$ is an $e$-value for $\tilde H_S = \bigcap_{i\in S} H_i$, then it is an $e$-value for $H_S$.
\end{lemma}

\begin{proof}
We have $\mathrm{E}_\mathrm{P}(\mathbf{e}) \leq 1$ for all $\mathrm{P} \in \tilde H_S = \{\mathrm{P} \in M\colon S \subseteq N_\mathrm{P}\} \supseteq H_S$.
\end{proof}

We are now ready to formulate our main result. Based on any suite $\mathbf{E} = (E_S)_{S \in \mathcal{M}}$ we can construct an FDR-controlling procedure as follows. Define the an \emph{$e$-partitioning procedure} as
\begin{equation} \label{eq: partitioning}
\mathcal{R}_\alpha(\mathbf{E}) = \Big\{ R \in 2^{[m]}\colon \alpha \mathbf{e}_S \geq \frac{|R \cap S|}{|R| \vee 1} \textrm{\ for all $S \in \mathcal{M}$}\Big\}.
\end{equation}
To understand why $\mathcal{R}_\alpha(\mathbf{E})$ would control FDR at $\alpha$, note that the FDP $|R \cap N_\mathrm{P}|/|R|$ depends on $\mathrm{P}$ only through $N_\mathrm{P}$. Equation (\ref{eq: partitioning}) bounds the FDP for all $S=N_\mathrm{P}$ by $\alpha \mathbf{e}_S$, which has expectation at most $\alpha$.

We claim not just that (\ref{eq: partitioning}) controls FDR, but that every FDR-control procedure is of the form (\ref{eq: partitioning}). This is the $e$-Partitioning Principle of FDR control, formulated as Theorem \ref{thm: principle}. Analogous to the Closure Principle for FWER and FDP \citep{marcus1976closed, goeman2021only}, it says that the $e$-partitioning procedure is both necessary and sufficient for FDR control. 

\begin{theorem}[The $e$-Partitioning Principle] \label{thm: principle}
$\boldsymbol{\mathcal{R}}$ controls FDR at level $\alpha$ if and only if $\boldsymbol{\mathcal{R}} \subseteq \mathcal{R}_\alpha(\mathbf{E})$ for a suite of $e$-values $\mathbf{E}$.
\end{theorem}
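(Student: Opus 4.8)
The plan is to prove the two implications separately. The ``if'' direction is a one-line consequence of the $e$-value property and linearity of expectation, and the ``only if'' direction rests on exhibiting a single canonical suite built directly from $\boldsymbol{\mathcal{R}}$.

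For sufficiency, I would fix $\mathrm{P} \in M$ and put $S^\ast = N_\mathrm{P}$. When $S^\ast = \emptyset$ every FDP is $0$ and there is nothing to prove, so suppose $S^\ast \in \mathcal{M}$. By the definition (\ref{eq: partitioning}) of $\mathcal{R}_\alpha(\mathbf{E})$, specialized to the single index $S = S^\ast$, every $R \in \mathcal{R}_\alpha(\mathbf{E})$ --- hence, by hypothesis, every $R \in \boldsymbol{\mathcal{R}}$ --- satisfies $|R \cap S^\ast|/(|R| \vee 1) \leq \alpha \mathbf{e}_{S^\ast}$. Maximizing over $R \in \boldsymbol{\mathcal{R}}$ (reading the maximum as $0$ when $\boldsymbol{\mathcal{R}} = \emptyset$), this holds surely, and taking expectations under $\mathrm{P}$ gives
\[
\mathrm{E}_\mathrm{P}\Big( \max_{R \in \boldsymbol{\mathcal{R}}} \frac{|R \cap N_\mathrm{P}|}{|R| \vee 1} \Big) \leq \alpha\, \mathrm{E}_\mathrm{P}(\mathbf{e}_{S^\ast}) \leq \alpha,
\]
the last step because $\mathbf{e}_{S^\ast}$ is an $e$-value for $H_{S^\ast}$ and $\mathrm{P} \in H_{S^\ast}$. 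This is exactly Definition \ref{def: FDR}.

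For necessity, suppose $\boldsymbol{\mathcal{R}}$ controls FDR at $\alpha$. The construction I would use is, for each $S \in \mathcal{M}$,
\[
\mathbf{e}_S = \frac{1}{\alpha}\, \max_{R \in \boldsymbol{\mathcal{R}}} \frac{|R \cap S|}{|R| \vee 1},
\]
with the same convention for an empty collection. Three short checks remain. (i) $\mathbf{e}_S$ is a legitimate $e$-value for $H_S$: it is nonnegative and, as a maximum over the finite collection $\boldsymbol{\mathcal{R}} \subseteq 2^{[m]}$, measurable; and for $\mathrm{P} \in H_S$ we have $N_\mathrm{P} = S$, so $\mathrm{E}_\mathrm{P}(\mathbf{e}_S)$ equals $\alpha^{-1}$ times the simultaneous FDP expectation of Definition \ref{def: FDR}, which is at most $\alpha^{-1}\cdot\alpha = 1$ by the control hypothesis. (ii) Hence $\mathbf{E} = (\mathbf{e}_S)_{S \in \mathcal{M}}$ is a suite. (iii) $\boldsymbol{\mathcal{R}} \subseteq \mathcal{R}_\alpha(\mathbf{E})$: for every $R \in \boldsymbol{\mathcal{R}}$ and every $S \in \mathcal{M}$, by construction $\alpha \mathbf{e}_S = \max_{R' \in \boldsymbol{\mathcal{R}}} |R' \cap S|/(|R'| \vee 1) \geq |R \cap S|/(|R| \vee 1)$, which is precisely the membership condition (\ref{eq: partitioning}).

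I do not expect a genuine technical obstacle; the only real decision is the choice of suite, and the crucial realization is that the right $e$-value for $H_S$ is the \emph{maximum over the entire collection} $\boldsymbol{\mathcal{R}}$ of the FDP evaluated against $S$. This single object does double duty: being a maximum, it dominates $|R \cap S|/(|R|\vee 1)$ for all $R \in \boldsymbol{\mathcal{R}}$ at once, so the containment holds; and when $S$ equals the true configuration $N_\mathrm{P}$ it coincides with the simultaneous FDP of Definition \ref{def: FDR}, so its expectation is controlled. Everything else is bookkeeping: the conventions for $R = \emptyset$ and $\boldsymbol{\mathcal{R}} = \emptyset$, the harmless exclusion of $H_\emptyset$ from $\mathcal{M}$ (no false rejection is possible under it), and measurability of a finite maximum. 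This is the FDR counterpart of reading off the local tests of an FWER procedure in the Closure/Partitioning Principle, and it makes the correspondence constructive, unlike the non-constructive recovery in \citet[Theorem~6.5]{ignatiadis2024compound}.
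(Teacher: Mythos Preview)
Your proposal is correct and follows essentially the same route as the paper: the sufficiency argument bounds the simultaneous FDP by $\alpha\mathbf{e}_{N_\mathrm{P}}$ and takes expectations, and the necessity argument constructs the canonical suite $\mathbf{e}_S = \alpha^{-1}\max_{R\in\boldsymbol{\mathcal{R}}}|R\cap S|/(|R|\vee 1)$, verifying it is an $e$-value and that it dominates the membership condition. Your write-up is slightly more explicit about edge cases (empty $\boldsymbol{\mathcal{R}}$, measurability), but there is no substantive difference.
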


\begin{proof}
Suppose $\mathbf{E} = (\mathbf{e}_S)_{S \in \mathcal{M}}$ is a suite of $e$-values. Choose any $\mathrm{P} \in M$. If $N_\mathrm{P} \neq \emptyset$, let $S=N_\mathrm{P}$. Then $\mathrm{P} \in H_S$, so that
\[
\mathrm{E}_\mathrm{P}\bigg( \max_{R \in \mathcal{R}_\alpha(\mathbf{E})} \frac{|R \cap N_\mathrm{P}|}{|R| \vee 1} \bigg)  \leq 
\mathrm{E}_\mathrm{P} (\alpha \mathbf{e}_S) \leq \alpha,
\]
and the same holds trivially if $N_\mathrm{P} = \emptyset$, so that $\mathcal{R}_\alpha(\mathbf{E})$, and consequently $\boldsymbol{\mathcal{R}}$, controls FDR at level $\alpha$. This proves the ``if'' part. 

Next, suppose $\boldsymbol{\mathcal{R}}$ controls FDR at level $\alpha$. Choose any $S \in \mathcal{M}$, and $\mathrm{P} \in H_S$. Then
\[
\mathbf{e}_S = \frac1\alpha \max_{R \in \boldsymbol{\mathcal{R}}}  \frac{|R \cap S|}{|R| \vee 1} = \frac1\alpha \max_{R \in \boldsymbol{\mathcal{R}}}  \frac{|R \cap N_\mathrm{P}|}{|R| \vee 1}
\]
has $\mathrm{E}_\mathrm{P}(\mathbf{e}_S) \leq 1$. Therefore, $\mathbf{E} = (\mathbf{e}_S)_{S \in \mathcal{M}}$ is a suite of $e$-values. For every $\mathbf{R} \in \boldsymbol{\mathcal{R}}$ and every $S \in \mathcal{M}$, we have, 
\[
\alpha \mathbf{e}_S = \max_{U \in \boldsymbol{\mathcal{R}}}  \frac{|U \cap S|}{|U| \vee 1} \geq \frac{|\mathbf{R} \cap S|}{|\mathbf{R}| \vee 1},
\]
so $\mathbf{R} \in \mathcal{R}_\alpha(\mathbf{E})$. This proves the ``only if'' part.
\end{proof}

It is worth making explicit that Theorem \ref{thm: principle} is not tied to our novel Definition \ref{def: FDR} of simultaneous FDR, as Corollary \ref{cor: principle} below, which combines Theorem \ref{thm: principle} and Lemma \ref{lem: duality}, makes clear. In fact, rewriting an FDR-controlling $\mathbf{R}$ in terms of a suite of $e$-values is often a way of obtaining simultaneous FDR control for a classical FDR control methods in a less trivial way than was done by Lemma \ref{lem: duality}. We will look into this aspect in more detail in Section \ref{sec: flexible}.

\begin{corollary} \label{cor: principle}
$\mathbf{R}$ controls FDR at level $\alpha$ if and only if $\mathbf{R} \in \mathcal{R}_\alpha(\mathbf{E})$ for a suite of $e$-values $\mathbf{E}$.
\end{corollary}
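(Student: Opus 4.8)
The plan is to derive this directly from Theorem \ref{thm: principle} and Lemma \ref{lem: duality}, treating a single random set $\mathbf{R}$ as the singleton collection $\boldsymbol{\mathcal{R}} = \{\mathbf{R}\}$; once the two notions of FDR control are lined up correctly, there is essentially nothing new to prove.

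For the ``only if'' direction, I would start from the assumption that $\mathbf{R}$ controls FDR at level $\alpha$ in the sense of Definition \ref{def: classical FDR}. The first implication in Lemma \ref{lem: duality} then says that the collection $\boldsymbol{\mathcal{R}} = \{\mathbf{R}\}$ controls FDR at level $\alpha$ in the simultaneous sense of Definition \ref{def: FDR}. Applying the ``only if'' part of Theorem \ref{thm: principle} to this collection produces a suite of $e$-values $\mathbf{E}$ with $\{\mathbf{R}\} = \boldsymbol{\mathcal{R}} \subseteq \mathcal{R}_\alpha(\mathbf{E})$, which is exactly the assertion $\mathbf{R} \in \mathcal{R}_\alpha(\mathbf{E})$.

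For the ``if'' direction I would run the same argument in reverse: assuming $\mathbf{R} \in \mathcal{R}_\alpha(\mathbf{E})$ for some suite $\mathbf{E}$, the singleton $\boldsymbol{\mathcal{R}} = \{\mathbf{R}\}$ satisfies $\boldsymbol{\mathcal{R}} \subseteq \mathcal{R}_\alpha(\mathbf{E})$, so by the ``if'' part of Theorem \ref{thm: principle} it controls FDR at level $\alpha$ in the sense of Definition \ref{def: FDR}; the second implication in Lemma \ref{lem: duality}, applied to $\mathbf{R} \in \boldsymbol{\mathcal{R}}$, then yields that $\mathbf{R}$ itself controls FDR at level $\alpha$ in the classical sense.

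I do not anticipate any genuine obstacle; the only point requiring care is bookkeeping between Definition \ref{def: classical FDR} and Definition \ref{def: FDR} and applying Lemma \ref{lem: duality} in the right direction in each half. As an alternative that avoids the simultaneous machinery entirely, one could simply repeat the proof of Theorem \ref{thm: principle} verbatim with $\boldsymbol{\mathcal{R}}$ replaced by $\{\mathbf{R}\}$ throughout, using the explicit witness $\mathbf{e}_S = \alpha^{-1}\, |\mathbf{R} \cap S|/(|\mathbf{R}| \vee 1)$ for each $S \in \mathcal{M}$; but invoking the already-established results is shorter and makes the logical dependence transparent.
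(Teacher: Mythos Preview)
Your proposal is correct and matches the paper's approach exactly: the paper presents this corollary as an immediate combination of Theorem~\ref{thm: principle} and Lemma~\ref{lem: duality}, without even writing out a proof, and your argument via the singleton collection $\boldsymbol{\mathcal{R}} = \{\mathbf{R}\}$ is precisely the intended reasoning.
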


The $e$-Partitioning Principle reduces the task of constructing an FDR control procedure to the much simpler task of constructing $e$-values for partitioning hypotheses. When constructing these $e$-values, the researcher should take into account any knowledge, or lack of it, on the joint distribution of the data. After choosing $e$-values, the remaining task, implementing (\ref{eq: partitioning}), is purely computational. This computation may have exponential complexity, since $2^m-1$ $e$-values must be taken into account. However, computation can be done in polynomial time in important cases, as we shall see in Section \ref{sec: computation}.

One way to view the general method of the $e$-Partitioning Principle (\ref{eq: partitioning}) is that it divides the model $M$ into parts, designs an FDR-bound for each part, and combines the results to an overall FDR-control procedure. When constructing each of the partial FDR-bounds, the method designer may assume $H_S$, which gives access to powerful oracle-like information in the form of exact knowledge of $S=N_\mathrm{P}$, the set of true null hypotheses. This is important information in FDR control, where knowledge of $\pi_{0, \mathrm{P}} = |N_\mathrm{P}|/m$ is often already extremely valuable \citep{storey2004strong, benjamini2006adaptive, blanchard2009adaptive}.

The $e$-Partitioning Principle is also helpful for improving existing methods. The ``only if'' part of Theorem \ref{thm: principle} asserts that every existing method controlling FDR has an implicit suite of $e$-values that can be used to reconstruct the method as a special case of (\ref{eq: partitioning}). These $e$-values may be inefficient, e.g.\ because they have an expectation strictly smaller than 1; in such cases Theorem \ref{thm: principle} can sometimes be used to propose a superior method based on a suite of stochastically larger $e$-values. We will give several examples of such improvements in Sections \ref{sec: e-combining} and \ref{sec: p-combining}. However, it should be noted that the suite of $e$-values constructed in the proof of Theorem \ref{thm: principle} is rather circular and not very insightful. For finding improvements it is better to reverse engineer the proof of the existing method to distill the $e$-values implicitly or explicitly used there, and to try and improve those.

Potential for improvement also arises when we note that the compound $e$-values in (\ref{eq: partitioning}) are never compared to critical values larger than $1/\alpha$. Without loss of power, we may therefore truncate all compound $e$-values at $1/\alpha$. If that operation results in $e$-values with expectation strictly below 1, there may be room for uniform improvement of the method as described above. In \citet{wang2022false} this is done by \emph{boosting} the e-values using a truncation function. However, in Section \ref{sec: alpha} we will see that in some cases there are good reasons to forgo such a truncation.

Finally, it is worth noting that, in the proof of Theorem \ref{thm: principle}, the control of FDR hinges on the validity of the single $e$-value $e_S$, for $S=N_\mathrm{P}$. This implies that relevant properties of $e_S$ translate directly to properties of the FDR control procedure. For example, if $e_{S}$ is an $e$-value only in some asymptotic sense, the FDR control of the resulting procedure converges to $\alpha$ at the same rate as $\mathrm{E}(e_S)$ converges to 1.

Returning to \citet{ignatiadis2024compound}, we conclude that the 
$e$-Partitioning Principle is an operationalization of their insight, generalized to simultaneous FDR according to Definition \ref{def: FDR}. It is ironic, however, that as we will see in the next section, this principle shows that eBH itself is not admissible.

\section{FDR-control by combining $e$-values} \label{sec: e-combining}

As a first application of the $e$-Partitioning Principle we will look at the important special case that we have $e$-values available for the hypotheses $H_1, \ldots, H_m$, and that the FDR-controlling $\boldsymbol{\mathcal{R}}_\alpha$ should be a function of these $e$-values. Let $\mathbf{e}_1 \geq \ldots \geq \mathbf{e}_m$ be $e$-values for $H_1, \ldots, H_m$, respectively, which we assume ordered without loss of generality. We make no assumptions on the joint distribution of these $e$-values. 

For this situation \citet{wang2022false} proposed the eBH procedure. It is essentially the BH procedure \citep{benjamini1995controlling} applied to $\mathbf{p}_1 =1/\mathbf{e}_1, \ldots, \mathbf{p}_m = 1/\mathbf{e}_m$. However, where BH is only valid for $p$-values whose joint distribution satisfies the PRDS condition \citep{benjamini2001control}, the eBH procedure is valid for any joint distribution of the $e$-values. The eBH procedure at level $\alpha$ rejects the set $\mathbf{R}_\alpha = [\mathbf{r}_\alpha]$, where
\begin{equation} \label{eq: eBH}
\mathbf{r}_\alpha = \max\{1 \leq r \leq m\colon r\mathbf{e}_r \geq m/\alpha\},
\end{equation}
or 0 if the maximum does not exist. The set  $\mathbf{R}_\alpha$ controls FDR at level $\alpha$, as proven by \citet{wang2022false}.

We will now use $e$-Partitioning Principle to propose an alternative method for controlling FDR based on $e$-values under arbitrary dependence. We build on the work of \citet{vovk2021values}, who showed that the only admissible exchangeable method for combining arbitrarily dependent $e$-values into a new $e$-value is to average them, mixing, if desired, with the trivial $e$-value of 1. We will use the unmixed average
\begin{equation} \label{eq: mean e}
\mathbf{e}_S = \frac1{|S|} \sum_{i \in S} \mathbf{e}_i
\end{equation}
as an $e$-value for $H_S$, $S \in \mathcal{M}$. This is an $e$-value for $H_S$ by the results of \cite{vovk2021values} and Lemma \ref{lem: closure}, or directly by Lemma \ref{lem: mean e} below.

\begin{lemma} \label{lem: mean e}
    For all $S \in \mathcal{M}$ the $\mathbf{e}_S $ defined in (\ref{eq: mean e}) is an $e$-value for $H_S$.
\end{lemma}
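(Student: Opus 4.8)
The plan is to prove directly that $\mathbf{e}_S = \frac{1}{|S|}\sum_{i\in S}\mathbf{e}_i$ is an $e$-value for $H_S$, i.e., that $\mathbf{e}_S \geq 0$ and $\mathrm{E}_\mathrm{P}(\mathbf{e}_S) \leq 1$ for every $\mathrm{P} \in H_S$. Nonnegativity is immediate: each $\mathbf{e}_i \geq 0$ by definition of an $e$-value, so any average of them is nonnegative, and $S \neq \emptyset$ since $S \in \mathcal{M}$, so the average is well-defined.

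For the expectation bound, fix $\mathrm{P} \in H_S$. By definition of the partitioning hypothesis, $\mathrm{P} \in H_S$ means $N_\mathrm{P} = S$, so in particular $S \subseteq N_\mathrm{P}$; that is, $\mathrm{P} \in H_i$ for every $i \in S$. Since $\mathbf{e}_i$ is an $e$-value for $H_i$, this gives $\mathrm{E}_\mathrm{P}(\mathbf{e}_i) \leq 1$ for each $i \in S$. By linearity of expectation,
\[
\mathrm{E}_\mathrm{P}(\mathbf{e}_S) = \frac{1}{|S|}\sum_{i\in S}\mathrm{E}_\mathrm{P}(\mathbf{e}_i) \leq \frac{1}{|S|}\sum_{i\in S} 1 = 1.
\]
Since $\mathrm{P} \in H_S$ was arbitrary, $\mathbf{e}_S$ is an $e$-value for $H_S$.

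There is essentially no obstacle here: the statement is a one-line consequence of linearity of expectation together with the observation that membership in $H_S$ forces all indices in $S$ to be true nulls. The only point worth flagging is the (trivial) distinction between $H_S$ and $\tilde H_S = \bigcap_{i\in S}H_i$ — we only need $\mathrm{P} \in \tilde H_S$, and $H_S \subseteq \tilde H_S$, so the argument is exactly the one already packaged in Lemma \ref{lem: closure} applied to the average $e$-value $\frac{1}{|S|}\sum_{i\in S}\mathbf{e}_i$ for $\tilde H_S$. I would present the short direct computation above rather than invoke Lemma \ref{lem: closure}, since it is self-contained and makes the role of linearity explicit.
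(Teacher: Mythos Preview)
Your proof is correct and follows essentially the same approach as the paper: both use that $H_S \subseteq H_i$ for each $i \in S$, so $\mathrm{E}_\mathrm{P}(\mathbf{e}_i) \leq 1$, and then apply linearity of expectation to the average. Your version is slightly more explicit (you check nonnegativity and spell out $N_\mathrm{P} = S$), but the argument is the same.
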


\begin{proof}
$\mathrm{E}_\mathrm{P}(\mathbf{e}_i) = 1$ for all $\mathrm{P} \in H_i \supseteq H_S$, so $\mathrm{E}_\mathrm{P}(\mathbf{e}_S) = \frac1{|S|} \sum_{i \in S} \mathrm{E}_\mathrm{P}(\mathbf{e}_i) \leq 1$ for all $\mathrm{P} \in H_S$.
\end{proof}

We can now apply (\ref{eq: partitioning}) using $\mathbf{E} = (\mathbf{e}_S)_{S \in \mathcal{M}}$ as the suite of $e$-values, to obtain the eBH+ procedure:
\begin{equation} \label{eq: proc mean e}
\mathcal{R}_\alpha(\mathbf{E}) = \bigg\{ R \in 2^{[m]}\colon  \frac\alpha{|S|} \sum_{i \in S} \mathbf{e}_i \geq \frac{|R \cap S|}{|R| \vee 1} \textrm{\ for all $S \in \mathcal{M}$}\bigg\}.
\end{equation}
This procedure controls FDR under any joint distribution of the $e$-values by Theorem \ref{thm: principle} and Lemma \ref{lem: mean e}. Moreover, the procedure uniformly improves upon eBH as Theorem \ref{thm: eBH} asserts, motivating its name of eBH+. First, we must formally define what we mean by a uniform improvement of a classical FDR-control procedure by a simultaneous FDR-control procedure. Definition \ref{def: uniform} makes this explicit. A simultaneous procedure $\boldsymbol{\mathcal{R}}$ that uniformly improves a classical procedure $\mathbf{S}$ must always allow rejection of the same set of hypotheses, and sometimes allow rejection of a strictly larger set.

\begin{definition} \label{def: uniform}
Let $\boldsymbol{\mathcal{R}}$ and $\mathbf{S}$ both control FDR at level $\alpha$. We say that $\boldsymbol{\mathcal{R}}$ uniformly improves $\mathbf{S}$ if (1.) $\mathbf{S} \in \boldsymbol{\mathcal{R}}$; and (2.) there exists an event $E$, such that, if $E$ happens, $\mathbf{S} \subset \mathbf{R} \in \boldsymbol{\mathcal{R}}$.
\end{definition}

Definition \ref{def: uniform} does not explicitly exclude uninteresting ``uniform improvements'' that reject more than the original procedure only in one or more null events. It will be clear from the context that the improvements proposed in this paper are not of that trivial type.

\begin{theorem} \label{thm: eBH}
If $m>1$, eBH+ uniformly improves eBH.   
\end{theorem}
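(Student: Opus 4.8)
The plan is to verify the two conditions of Definition \ref{def: uniform} for $\boldsymbol{\mathcal{R}} = \mathcal{R}_\alpha(\mathbf{E})$ with $\mathbf{E}$ the suite of averaged $e$-values from (\ref{eq: mean e}), and $\mathbf{S} = \mathbf{R}_\alpha = [\mathbf{r}_\alpha]$ the eBH rejection set. Both procedures control FDR at level $\alpha$ (eBH by \citet{wang2022false}, eBH+ by Theorem \ref{thm: principle} and Lemma \ref{lem: mean e}), so it remains to show (1.) $[\mathbf{r}_\alpha] \in \mathcal{R}_\alpha(\mathbf{E})$ always, and (2.) on some event, eBH+ admits a strict superset of $[\mathbf{r}_\alpha]$.

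For condition (1.), I would first dispose of the trivial case $\mathbf{r}_\alpha = 0$, for which $[\mathbf{r}_\alpha] = \emptyset$ and membership in (\ref{eq: proc mean e}) holds because the right-hand side $|R \cap S|/(|R|\vee 1)$ vanishes for $R=\emptyset$. For $\mathbf{r}_\alpha = r \geq 1$, I must check that for every $S \in \mathcal{M}$,
\[
\frac{\alpha}{|S|}\sum_{i\in S}\mathbf{e}_i \;\geq\; \frac{|[r]\cap S|}{r}.
\]
The key observation is that since the $e$-values are ordered $\mathbf{e}_1 \geq \cdots \geq \mathbf{e}_m$, among all subsets $S$ of a given size the left-hand side is minimized, and the ratio most binding, when $S$ is a contiguous block of the \emph{smallest} $e$-values overlapping $[r]$ as little as possible — but actually the cleanest route is to fix $k = |[r]\cap S|$ and $\ell = |S|$ with $k \le \ell$ and $k \le r$, and bound $\sum_{i\in S}\mathbf{e}_i$ from below using that $S$ contains $k$ indices from $[r]$ (each with $e$-value $\ge \mathbf{e}_r$, but more usefully their sum is at least... ) — the tidiest bound is $\sum_{i\in S}\mathbf{e}_i \ge \sum_{i \in S, i \le r}\mathbf{e}_i \ge \frac{k}{r}\sum_{i=1}^r \mathbf{e}_i$, where the last step uses that the average of the $k$ smallest among $\mathbf{e}_1,\dots,\mathbf{e}_r$ is at least... no: one should instead use $\sum_{i\in S} \mathbf{e}_i \ge k\,\mathbf{e}_r$ is too weak. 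The right inequality is: the $k$ elements of $S\cap[r]$ have indices among $\{1,\dots,r\}$, so their sum is at least the sum of the $k$ smallest of $\mathbf{e}_1,\dots,\mathbf{e}_r$, which is $\ge \frac{k}{r}\sum_{i=1}^r\mathbf{e}_i$ by the elementary fact that any $k$ of $r$ nonnegative numbers have sum at least $k/r$ times the total only when they are the smallest — this is false in general. So the correct elementary lemma to invoke is simply $\sum_{i=1}^r \mathbf{e}_i \ge r\,\mathbf{e}_r$ and hence from the eBH defining inequality $r\mathbf{e}_r \ge m/\alpha$ we get $\sum_{i\in S}\mathbf{e}_i \ge \sum_{i\in S\cap[r]}\mathbf{e}_i \ge k\,\mathbf{e}_r \ge k\,m/(\alpha r) \ge k\,|S|/(\alpha r)$, using $|S| \le m$. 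Rearranging gives exactly $\frac{\alpha}{|S|}\sum_{i\in S}\mathbf{e}_i \ge k/r = |[r]\cap S|/r$, which is condition (1.).

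For condition (2.), I would exhibit an event on which eBH+ can reject a strict superset of $[\mathbf{r}_\alpha]$. The natural candidate: take the event where the above chain of inequalities has slack — e.g. where $\mathbf{e}_1 = \cdots = \mathbf{e}_r$ are all substantially larger than $m/(\alpha r)$, or where $\mathbf{e}_{r+1}$ is also positive — and show that $R = [r+1]$ (or some $R \supsetneq [r]$) still satisfies all the constraints in (\ref{eq: proc mean e}) while $r+1 > \mathbf{r}_\alpha$. Concretely, one constructs a simple configuration of $e$-values (for $m>1$) with $\mathbf{r}_\alpha = r$ for which the averaged-$e$-value constraints for $R=[r+1]$ reduce to a handful of inequalities that hold, then notes this configuration has positive probability for a suitable joint law of valid $e$-values (any $e$-values achieving these fixed values are admissible since no dependence assumption is imposed). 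The case $m>1$ is needed precisely so that such slack can exist — with $m=1$ the only nontrivial rejected set is $\{1\}$ and there is nothing larger to reject.

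The main obstacle is the combinatorial verification in condition (1.): one must handle \emph{all} $S \in \mathcal{M}$ uniformly, and the argument pivots on correctly exploiting the ordering $\mathbf{e}_1 \ge \cdots \ge \mathbf{e}_m$ together with the eBH threshold $r\mathbf{e}_r \ge m/\alpha$ and the bound $|S| \le m$. I expect the crux is recognizing that the worst case is governed only by $k=|[r]\cap S|$ and $|S|$, and that the chain $\sum_{i\in S}\mathbf{e}_i \ge \sum_{i\in S\cap[r]}\mathbf{e}_i \ge k\mathbf{e}_r \ge km/(\alpha r) \ge k|S|/(\alpha r)$ closes the gap. Everything else — the FDR-control claims, the $\mathbf{r}_\alpha=0$ edge case, and producing one witnessing event for strictness — is routine.
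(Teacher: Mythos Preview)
Your argument for condition~(1.) is exactly the paper's: the same four-step chain
\[
\sum_{i\in S}\mathbf{e}_i \;\ge\; \sum_{i\in S\cap[r]}\mathbf{e}_i \;\ge\; k\,\mathbf{e}_r \;\ge\; \frac{km}{\alpha r} \;\ge\; \frac{k|S|}{\alpha r},
\]
using nonnegativity, the ordering $\mathbf{e}_i\ge\mathbf{e}_r$ for $i\le r$, the eBH threshold $r\mathbf{e}_r\ge m/\alpha$, and $|S|\le m$. For condition~(2.), you leave the witnessing event at the level of a sketch; the paper's concrete choice is the simplest instance of your template with $\mathbf{r}_\alpha=0$: take $\mathbf{e}_1=(m-\tfrac12)/\alpha$, $\mathbf{e}_2=1/(2\alpha)$, $\mathbf{e}_3=\cdots=\mathbf{e}_m=0$, so eBH rejects nothing while $\{1\}\in\mathcal{R}_\alpha(\mathbf{E})$ because $1\in S$ forces $\frac{1}{|S|}\sum_{i\in S}\mathbf{e}_i\ge 1/\alpha$.
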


\begin{proof}
Let $\mathcal{R}_\alpha(\mathbf{E})$ ($=$ eBH+) be defined in (\ref{eq: proc mean e}) and  $\mathbf{R}_\alpha$ ($=$ eBH) be defined just above (\ref{eq: eBH}). Suppose $\mathbf{R}_\alpha \neq \emptyset$ and choose any $S \subseteq \mathcal{M}$, we have
\[
\frac\alpha{|S|} \sum_{i \in S} \mathbf{e}_i \geq \frac\alpha{|S|} \sum_{i \in \mathbf{R}_\alpha \cap S} \mathbf{e}_i \geq \frac{|\mathbf{R}_\alpha \cap S|}{|S|} \alpha\mathbf{e}_{|\mathbf{R}_\alpha|} \geq \frac{m|\mathbf{R}_\alpha \cap S|}{|S||\mathbf{R}_\alpha|} \geq \frac{|\mathbf{R}_\alpha \cap S|}{|\mathbf{R}_\alpha|},
\]
which shows that $\mathbf{R}_\alpha \in \mathcal{R}_\alpha(\mathbf{E})$. The same is trivially true if $\mathbf{R}_\alpha=\emptyset$.

To show actual improvement, let $m>1$ and consider the event that $\mathbf{e}_1=(m-1/2)/\alpha$, $\mathbf{e}_2 = 1/(2\alpha)$, $\mathbf{e}_3 = \ldots = \mathbf{e}_m = 0$. Then $\mathbf{R}_\alpha = \emptyset$, but $\{1\} \in \mathcal{R}_\alpha(\mathbf{E})$, because $\frac\alpha{|S|} \sum_{i \in S} \mathbf{e}_i \geq 1/\alpha$ whenever $1 \in S$.
\end{proof}

The improvement from eBH to eBH+ can be substantial, as can be gauged from the proof of Theorem \ref{thm: eBH}, in which each of the four inequalities leaves a substantial amount of room, and each has a different worst case in which it is an equality. The eBH+ often rejects more hypotheses than eBH, and may reject some hypotheses in the event that eBH does not reject any. An extreme example of this is given in Example \ref{ex: eBH+}, in which eBH+ rejects all hypotheses, but eBH none.

\begin{example} \label{ex: eBH+}
Suppose $m >1$, and $(2m-2i+1)/(m\alpha) \leq \mathbf{e}_i < m/(i\alpha)$ for $i=1,\ldots, m$. Then eBH rejects nothing, but eBH+ rejects $[m]$.
\end{example}

\begin{proof}
It is tedious but straightforward to show that $(2m-2i+1)/m < m/i$ for $i=1,\ldots, m$, so that the example is not void. It follows immediately from (\ref{eq: eBH}) that eBH rejects nothing. Choose $R = [m]$ and any non-empty $S \subseteq [m]$ with $s=|S|$. Then
\[
\sum_{i \in S} \frac{\mathbf{e}_i}s \geq \sum_{i = m-s+1}^m \frac{\mathbf{e}_i}s \geq \sum_{i = m-s+1}^m \frac{2m-2i+1}{ms\alpha} =
\frac{2m-2 \frac{m + (m-s+1)}2 +1}{m\alpha} = \frac{s}{m\alpha} = \frac{|R \cap S|}{|R|\alpha}.
\]
\end{proof}

The eBH+ procedure is qualitatively different from the eBH procedure, and has some properties that eBH does not have, and which procedures constructed according to the Self-Consistency Principle in general do not have. In the first place, rejection of a certain set $\mathbf{R}$ depends not only on $e$-values of hypotheses in $\mathbf{R}$ itself, but also on the $e$-values of other hypotheses. To see an example of this, consider the case $m=2$, and $\mathbf{e}_1=9/(5\alpha)$ and $\mathbf{e}_2=0$. In this case, nothing is rejected. However, if we increase $\mathbf{e}_2$ to $1/(5\alpha)$, we obtain $\mathcal{R}_\alpha(\mathbf{E}) = \{\emptyset, \{1\}\}$. Increasing $\mathbf{e}_2$, apparently, facilitates the rejection of a set of hypotheses that does not include $H_2$. Secondly, $\mathcal{R}_\alpha(\mathbf{E})$ may reject hypotheses for which the corresponding $e$-value is less than $1/\alpha$. To see an example, consider $m=2$, $\mathbf{e}_1=3/(2\alpha)$ and $\mathbf{e}_2=1/(2\alpha)$. It is easily checked that $\{1,2\} \in \mathcal{R}_\alpha(\mathbf{E})$, implying that $H_2$ can be rejected with FDR control at $\alpha$, although $\mathbf{e}_2<1/\alpha$. Translated to $p$-values, these properties of the procedure of (\ref{eq: proc mean e}) are shared by adaptive FDR control procedures that plug in an estimate $\hat\pi_0$ of $\pi_{0,\mathrm{P}} = |N_\mathrm{P}|/m$ into a procedure controlling FDR at $\pi_{0,\mathrm{P}}\alpha$ 
\citep{storey2004strong, benjamini2006adaptive, blanchard2009adaptive}. For such procedures, rejection of a hypothesis
may also depend on $p$-values of other hypotheses through $\hat\pi_0$, and the procedure may reject hypotheses with $p$-values up to $\alpha/\hat\pi_0 > \alpha$. The eBH+ procedure can therefore be seen as an adaptive procedure, even though it was not explicitly constructed using an estimate of $\pi_{0,\mathrm{P}}$. 

Where eBH controls FDR according to Definition \ref{def: classical FDR} and rejects only a single set, eBH+ rejects a collection of such sets, following Definition \ref{def: FDR}. This extension to simultaneous FDR control is not simply the trivial extension of Lemma \ref{lem: duality}. To see this, remark that the proof of Theorem \ref{thm: eBH} does not just apply to the set $\mathbf{R}_\alpha$ rejected by eBH, but to any other self-consistent set, i.e., any set $[\mathbf{s}]$ for which $\mathbf{s}\mathbf{e}_\mathbf{s} \geq m/\alpha$. All such sets are therefore rejected by eBH+. We will explore use of the diversity of the collection $\boldsymbol{\mathcal{R}}$ rejected
by eBH+ in more detail in Section \ref{sec: flexible}.

\citet{wang2022false} already observe that eBH can be improved upon, i.e.\ the rejection set can be enlarged while retaining FDR control, by pre-processing the e-values in some way. \citet{wang2022false} themselves introduce \emph{boosting} by truncating e-values, when the marginal distribution of the null $e$-values is known. \citet{lee2024boosting} boost the e-values by conditioning on a specific sufficient statistic. Both of these improvements require assumptions on the distribution of the e-values. \citet{xu2023more}, in contrast, do not assume anything on the distribution of the $e$-values but boost the power of eBH by introducing exogenous randomness in a clever way: \emph{stochastic rounding}. 
While it is evident that truncating the e-values in a suitable way can also improve the power of methods based on the $e$-Partitioning Principle, such as eBH+, it is not straightforwardly clear whether stochastic rounding would give an improvement. In the set-up of stochastic rounding, some e-values are rounded up and some are rounded down to the grid of the eBH thresholds, in such a way that rounding them down will never lead to fewer rejections, and rounding up might lead to more. However, in our method, the magnitude of e-values from hypotheses that are not rejected do influence the rejected set, as in the example above. Investigating the influence of the various types of boosting and stochastic rounding on methods created by the $e$-Partitioning Principle is a direction for future work.

\section{FDR-control by combining $p$-values} \label{sec: p-combining}

Many FDR control procedures start from $p$-values $\mathbf{p}_1 \leq \ldots \leq \mathbf{p}_m$ for hypotheses $H_1, \ldots, H_m$. We will assume these $p$-values ordered without loss of generality. There are several famous procedures, the choice of which depends on the assumptions that we are willing to make on the joint distribution of the $p$-values. Here, we will explore two such procedures. The first is BY \citep{benjamini2001control}, which is valid for any distribution of the $p$-values. The second is the procedure of \citet{su2018fdr} based on his based on the FDR Linking Theorem. The Su method is valid under the PRDN assumption, a weaker variant of the PRDS assumption onderlying BH. We will place these two methods in the context of the $e$-Partitioning Principle and show that they can be uniformly improved.

Since the $e$-Partitioning Principle depends on $e$-values, we will need to convert the input $p$-values to $e$-values. We use $p$-to-$e$ converters for this \citep{shafer2011test}. A function $e(\mathbf{p})$ is a $p$-to-$e$ calibrator if $e(\mathbf{p})$ is an $e$-value whenever $\mathrm{p}$ is a $p$-value, i.e., whenever $\mathrm{P}(\mathbf{p}\leq t) \leq t$ for all $0 \leq t \leq 1$. One straightforward way to construct an FDR control procedure would be to calibrate $\mathbf{p}_1 \leq \ldots \leq \mathbf{p}_m$ to $e$-values and apply eBH+. However, in general, this does not turn out to be the most efficient approach.

\subsection{BY+: FDR control under general dependence}

The BY method of \citet{benjamini2001control} rejects the set $\mathbf{R}_\alpha = [\mathbf{r}_\alpha]$, where
\begin{equation} \label{eq: BY}
\mathbf{r}_\alpha = \max\{1\leq r\leq m\colon mh_m\mathbf{p}_i \leq r\alpha\},
\end{equation}
or 0 if the maximum does not exist, and $h_m = \sum_{i=1}^m 1/i$ is the $m$th harmonic number. As proven by \citet{benjamini2001control}, $\mathbf{R}_\alpha$ controls FDR for any joint distribution of the $p$-values.

To place the BY procedure into the context of the $e$-Partitioning Principle, we define the following $e$-value, motivated by \citet{xu2024post}, and building on the grid harmonic $p$-to-$e$ calibrator of \citet{vovk2022admissible}. This $e$-value averages, for every $S$, $e$-values obtained by applying a $p$-to-$e$ calibrator to the $p$-values of hypotheses in $S$; however, this $p$-to-$e$ calibrator depends on $S$ as well as on $\alpha$.

\begin{lemma} Under $H_S$
\begin{equation} \label{eq: e for BY}
\mathbf{e}_S = \sum_{i \in S}  \frac{ 1\{ h_{|S|} \mathbf{p}_i \leq \alpha  \}   }{ \alpha  \lceil |S| h_{|S|} \mathbf{p}_i / \alpha  \rceil }
\end{equation}
is an $e$-value.
\end{lemma}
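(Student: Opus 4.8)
The plan is to read (\ref{eq: e for BY}) as the average over $i \in S$ of $e$-values produced by applying a single $p$-to-$e$ calibrator to each $\mathbf{p}_i$, and then to check that this calibrator is valid. Write $n = |S|$ and define the function $c(p) = n\,1\{h_n p \le \alpha\}/\big(\alpha \lceil n h_n p/\alpha \rceil\big)$, so that $\mathbf{e}_S = \tfrac1n \sum_{i \in S} c(\mathbf{p}_i)$. Under $H_S$ we have $\mathrm{P} \in H_i$ for every $i \in S$, so each $\mathbf{p}_i$ is a genuine $p$-value under $\mathrm{P}$; hence, provided $c$ is a $p$-to-$e$ calibrator, each $c(\mathbf{p}_i)$ is an $e$-value, and their average $\mathbf{e}_S$ is an $e$-value for $H_S$ by the same argument as in Lemma \ref{lem: mean e} (or via Lemma \ref{lem: closure}, since each $c(\mathbf{p}_i)$ is an $e$-value for $H_i \supseteq \tilde H_S$). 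Nonnegativity of $\mathbf{e}_S$ is immediate, since every summand is nonnegative.

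So the substantive step is to show that $c$ is a valid $p$-to-$e$ calibrator. I would use the standard sufficient condition that any nonincreasing $c\colon[0,1]\to[0,\infty]$ with $\int_0^1 c(p)\,\mathrm{d}p \le 1$ is a calibrator: a valid $p$-value stochastically dominates the uniform distribution, so $\mathrm{E}_\mathrm{P}(c(\mathbf{p})) \le \mathrm{E}(c(U)) = \int_0^1 c(p)\,\mathrm{d}p$ for $U$ uniform on $[0,1]$. Monotonicity of $c$ is easy: on $p \in (0,1]$ both $p \mapsto 1\{h_n p \le \alpha\}$ and $p \mapsto 1/\lceil n h_n p/\alpha \rceil$ are nonnegative and nonincreasing, hence so is their product. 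For the integral, note that $c$ vanishes for $p > \alpha/h_n$ and that $\alpha/h_n \le \alpha \le 1$; partitioning $(0,\alpha/h_n]$ into the $n$ subintervals on which $\lceil n h_n p/\alpha \rceil$ equals $k = 1,\dots,n$ — each of length $\alpha/(n h_n)$, and on which $c$ equals $n/(\alpha k)$ — gives $\int_0^1 c = \sum_{k=1}^n \tfrac{1}{h_n k} = 1$. (Equivalently, substitute $u = n h_n p/\alpha$ to obtain $\tfrac{1}{h_n}\int_0^n \mathrm{d}u/\lceil u \rceil = \tfrac{1}{h_n}\sum_{k=1}^n \tfrac1k$.)

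There is no genuine obstacle beyond bookkeeping. The one point worth a word is the boundary value $\mathbf{p}_i = 0$, at which the ceiling in the denominator is $0$; but under $H_i$ this event has probability zero, so it is irrelevant to the expectation, and the corresponding term may be left undefined or set to $+\infty$. Since the integral comes out exactly $1$, the calibration step wastes nothing; the interesting content — that the $e$-partitioning procedure built from this suite uniformly improves BY — is what the following theorem has to establish.
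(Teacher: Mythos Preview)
Your proof is correct and follows the same two-step structure as the paper's: recognize $\mathbf{e}_S$ as the average over $i\in S$ of $c(\mathbf{p}_i)$ for a single calibrator $c$, and then verify that $c$ is a valid $p$-to-$e$ calibrator. The only difference is that the paper outsources the second step by citing \citet[Proposition~3]{xu2024post} for the grid-harmonic calibrator, whereas you prove it directly by checking monotonicity and computing $\int_0^1 c=1$; your self-contained verification is clean and loses nothing.
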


\begin{proof} 
\citet[Proposition 3]{xu2024post} proved that $$e(\mathbf{p}) = \frac{k 1\{ h_k \mathbf{p} \leq \alpha  \}   }{ \alpha  \lceil k h_k  \mathbf{p} / \alpha  \rceil }$$ is a $p$-to-$e$ calibrator for all $\alpha \in (0,1)$ and all $k \in \mathbb{N}$. We take $k=|S|$, apply the $p$-to-$e$ calibrator to all $\mathbf{p}_i$, $i \in S$, and note that the average of the resulting $e$-values is again an $e$-value.
\end{proof}

Next, we can build an FDR control method by plugging in $\mathbf{E} = (\mathbf{e}_S)_{S \in \mathcal{M}}$, for the $e$-values we have just defined in (\ref{eq: e for BY}), into the general $e$-Partitioning Procedure (\ref{eq: partitioning}). We call the resulting procedure BY+ because it uniformly improves BY. This property is proven in the following theorem. 

\begin{theorem}
If $m>1$, BY+ uniformly improves BY.
\end{theorem}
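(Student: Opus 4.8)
The plan is to mirror the proof of Theorem \ref{thm: eBH} exactly, replacing the averaging‑of‑$e$‑values argument with the grid‑harmonic calibrator argument. Concretely, I would proceed in two parts, corresponding to the two clauses of Definition \ref{def: uniform}.

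First, I would show that $\mathbf{R}_\alpha \in \mathcal{R}_\alpha(\mathbf{E})$, i.e.\ that whatever BY rejects is always among the sets rejected by BY+. Assume $\mathbf{R}_\alpha = [\mathbf{r}_\alpha] \neq \emptyset$ (the empty case being trivial), and fix an arbitrary $S \in \mathcal{M}$. By (\ref{eq: partitioning}) I must verify $\alpha \mathbf{e}_S \geq |\mathbf{R}_\alpha \cap S| / (|\mathbf{R}_\alpha| \vee 1)$. The key is to lower‑bound $\mathbf{e}_S$ from (\ref{eq: e for BY}) by restricting the sum to indices $i \in \mathbf{R}_\alpha \cap S$. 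For each such $i$, the BY threshold (\ref{eq: BY}) gives $m h_m \mathbf{p}_i \leq \mathbf{r}_\alpha \alpha$, hence $\mathbf{p}_i \leq \mathbf{r}_\alpha \alpha/(m h_m)$; since $|S| \leq m$ and the harmonic number is increasing, $h_{|S|} \leq h_m$, so $h_{|S|}\mathbf{p}_i \leq \mathbf{r}_\alpha\alpha/m \le \alpha$, which makes the indicator in (\ref{eq: e for BY}) equal to $1$. Moreover $\lceil |S| h_{|S|}\mathbf{p}_i/\alpha\rceil \le \lceil |S|\,\mathbf{r}_\alpha/m\rceil \le \mathbf{r}_\alpha$ (using $|S|\le m$ and that $\mathbf{r}_\alpha$ is a positive integer). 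Therefore each retained term is at least $1/(\alpha \mathbf{r}_\alpha)$, giving $\mathbf{e}_S \geq |\mathbf{R}_\alpha \cap S|/(\alpha \mathbf{r}_\alpha)$, i.e.\ $\alpha\mathbf{e}_S \geq |\mathbf{R}_\alpha \cap S|/|\mathbf{R}_\alpha|$, as required. This establishes clause (1.) of Definition \ref{def: uniform}; clause "$\mathbf{R}_\alpha$ controls FDR" is already delivered by Theorem \ref{thm: principle} together with the preceding lemma.

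Second, I would exhibit an explicit event (analogous to the $\mathbf{e}_1 = (m-1/2)/\alpha$ construction in the proof of Theorem \ref{thm: eBH}) on which BY rejects nothing but BY+ rejects a strictly larger set — for instance, $\{1\} \in \mathcal{R}_\alpha(\mathbf{E})$ while $\mathbf{R}_\alpha = \emptyset$. A natural choice is $\mathbf{p}_1$ slightly above the BH/BY rejection boundary but small enough that $h_1 \mathbf{p}_1 = \mathbf{p}_1 \le \alpha$ and $\lceil h_1 \mathbf{p}_1/\alpha\rceil = 1$, so that $\mathbf{e}_{\{1\}} = 1/\alpha$ and hence $\alpha\mathbf{e}_{\{1\}} \ge 1 = |\{1\}\cap S|/(|\{1\}|)$ whenever $1 \in S$ (and trivially otherwise when $1\notin S$), placing $\{1\}$ in $\mathcal{R}_\alpha(\mathbf{E})$; simultaneously choosing $\mathbf{p}_1 > \alpha/(m h_m)$ and $\mathbf{p}_2 = \cdots = \mathbf{p}_m = 1$ forces $\mathbf{r}_\alpha = 0$ in (\ref{eq: BY}). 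One has to check the arithmetic that such a $\mathbf{p}_1$ exists, i.e.\ that $\alpha/(m h_m) < \alpha$ for $m > 1$, which is clear.

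The main obstacle is the chain of ceiling‑function and monotonicity estimates in the first part: one must be careful that $h_{|S|} \le h_m$, that $|S| \le m$, and that $\lceil |S| h_{|S|}\mathbf{p}_i/\alpha\rceil \le \mathbf{r}_\alpha$ all hold with the right directions of inequality, since the calibrator in (\ref{eq: e for BY}) is $S$‑dependent and a naive bound could go the wrong way. Everything else is bookkeeping parallel to Theorem \ref{thm: eBH}. It is also worth remarking (as the authors do after Theorem \ref{thm: eBH}) that, just as for eBH+, the argument shows $\mathcal{R}_\alpha(\mathbf{E})$ contains \emph{every} self‑consistent BY‑type set $[\mathbf{s}]$ with $m h_m \mathbf{p}_{\mathbf{s}} \le \mathbf{s}\alpha$, not merely the one maximal set rejected by BY, so the improvement is genuinely simultaneous and not the trivial extension of Lemma \ref{lem: duality}.
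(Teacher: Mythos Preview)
Your first part is correct and matches the paper's argument essentially line for line: restrict the sum defining $\mathbf{e}_S$ to $i\in\mathbf{R}_\alpha\cap S$, use the BY threshold to bound $\mathbf{p}_i$, and then use $|S|h_{|S|}\le mh_m$ to push the ceiling below $\mathbf{r}_\alpha$.

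The second part, however, contains a genuine error. To place $R=\{1\}$ in $\mathcal{R}_\alpha(\mathbf{E})$ you must verify $\alpha\mathbf{e}_S\ge |\{1\}\cap S|/1$ for \emph{every} $S\in\mathcal{M}$, i.e.\ $\alpha\mathbf{e}_S\ge 1$ for every $S$ containing $1$. You only check $S=\{1\}$; the sentence ``$\alpha\mathbf{e}_{\{1\}}\ge 1=|\{1\}\cap S|/|\{1\}|$ whenever $1\in S$'' conflates $\mathbf{e}_{\{1\}}$ with $\mathbf{e}_S$. With your construction ($\mathbf{p}_1$ just above $\alpha/(mh_m)$ and $\mathbf{p}_2=\cdots=\mathbf{p}_m=1$), take $S=[m]$: the only nonzero summand in $\mathbf{e}_{[m]}$ is the $i=1$ term, and since $mh_m\mathbf{p}_1/\alpha>1$ its ceiling is at least $2$, so $\alpha\mathbf{e}_{[m]}\le 1/2<1$. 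Hence $\{1\}\notin\mathcal{R}_\alpha(\mathbf{E})$ and your improvement event does not work. In fact the single‑term structure of $\mathbf{e}_{[m]}$ here makes ``BY rejects nothing but BY+ rejects $\{1\}$'' impossible with all other $p$-values equal to $1$: you would need $mh_m\mathbf{p}_1\le\alpha$, which is exactly the BY threshold.

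The paper sidesteps this by exhibiting an event where BY rejects $\{1\}$ and BY+ rejects the strictly larger set $\{1,2\}$: take $\mathbf{p}_1=\alpha/(mh_m)$ and $\mathbf{p}_2$ small enough that every $S$ containing $2$ has $\alpha\mathbf{e}_S\ge 1/2$, while $\mathbf{p}_2>2\alpha/(mh_m)$ keeps BY from rejecting $H_2$. The point is that for $R=\{1,2\}$ the target $|R\cap S|/|R|$ drops to $1/2$ when $1\notin S$, which is much easier to meet than the value $1$ required for $R=\{1\}$.
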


\begin{proof}
Let $\mathcal{R}_\alpha(\mathbf{E})$ ($=$ BY+) be defined in (\ref{eq: e for BY}) and  $\mathbf{R}_\alpha$ ($=$ BY) be defined just above (\ref{eq: BY}). By definition of $\mathbf{r}_\alpha$, we have $\mathbf{p}_i \leq |\mathbf{R}_\alpha|\alpha/mh_m$ for all $i \in \mathbf{R}_\alpha$. Suppose $\mathbf{R}_\alpha\neq\emptyset$ and choose any $S \in \mathcal{M}$. We have
\[
\alpha\mathbf{e}_S 
\geq \sum_{i\in S \cap \mathbf{R}_\alpha} \frac{ 1\{ h_{|S|} \mathbf{p}_i \leq \alpha  \}   }{ \lceil |S| h_{|S|} \mathbf{p}_i / \alpha  \rceil } 
= \sum_{i\in S \cap \mathbf{R}_\alpha} \frac{1} {\lceil |S| h_{|S|} \mathbf{p}_i / \alpha  \rceil }
\geq \sum_{i\in S \cap \mathbf{R}_\alpha} \frac{1} {\lceil |S| h_{|S|} |\mathbf{R}_\alpha| / m h_m  \rceil }
\geq \frac{|S \cap \mathbf{R}_\alpha|} {|\mathbf{R}_\alpha| }.
\]
which shows that $\mathbf{R}_\alpha \in \mathcal{R}_\alpha(\mathbf{E})$. The same is trivially true if $\mathbf{R}_\alpha=\emptyset$.

To show actual improvement, let $m>1$ and consider the event that $\mathbf{p}_1=\alpha/(mh_m)$, $\mathbf{p}_2 = 2\alpha/(mh_m-1)$, $\mathbf{p}_3 = \ldots = \mathbf{p}_m = 1$. Then $\mathbf{R}_\alpha = \{1\}$, but $\{1,2\} \in \mathcal{R}_\alpha(\mathbf{E})$, because $1 \in S$ implies that $\mathbf{e}_S \geq 1/\alpha$, and $2 \in S$ implies that $\mathbf{e}_S \geq 1/(2\alpha)$.
\end{proof}

We now construct an example where BY fails to reject any hypothesis, while BY+ rejects all but one.

\begin{example}
Let $m \geq 4$, and consider the event that
\[
\frac{i\alpha}{m h_m} < \mathbf{p}_i \leq \frac{(i+1)\alpha}{m h_m} \quad \text{for } i = 1, \ldots, m-1, \quad \text{and} \quad \mathbf{p}_m > \frac{\alpha}{h_m}.
\]
Then BY rejects nothing, but BY+ rejects $[m-1]$.
\end{example}

\begin{proof}
It is immediate from the definition that the BY rejection set is empty. Let $R=[m-1]$ and choose any non-empty $S \subseteq [m]$. We have \( \mathbf{1}\{ h_{|S|} \mathbf{p}_i \leq \alpha \} = 1 \) for all \( i \leq m-1 \). For \( S \neq [m] \), it follows that
\[
\left\lceil \frac{|S| h_{|S|} \mathbf{p}_i}{\alpha} \right\rceil \leq \left\lceil \frac{|S| h_{|S|} (i+1)}{m h_m} \right\rceil\leq m-1 \quad \text{for all } i \leq m-1.
\]
Hence, for all \( S \neq [m] \), we obtain
\[
\alpha \mathbf{e}_S \geq \sum_{i \in S \setminus \{m\} } \frac{1}{\left\lceil \frac{|S| h_{|S|} \mathbf{p}_i}{\alpha} \right\rceil} \geq \frac{|S \setminus \{m\}|}{m-1} = \frac{|S \cap R|}{|R|}.
\]
In case \( S = [m] \), we have
\[
\alpha \mathbf{e}_{[m]} = \sum_{i = 1}^{m-1} \frac{1}{\left\lceil \frac{m h_m \mathbf{p}_i}{\alpha} \right\rceil} \geq \sum_{i = 2}^{m} \frac{1}{i} \geq 1.
\]
This establishes that \( [m-1] \in \mathcal{R}_\alpha(\mathbf{E}) \).
\end{proof}

The improvement of BY+ upon BY is similar in spirit to the improvement of eBH+ relative to eBH. 
However, BY+ will never reject hypotheses $H_i$ with $\mathbf{p}_i > \alpha$, since when $i \in R$, for $S=\{i\}$, we have
\[
\alpha \mathbf{e}_{S} = \frac{1\{ p_i \leq \alpha\}}{\lceil \mathbf{p}_i / \alpha \rceil}=0 < \frac{|S \cap R|}{|R|}.
\]
Like eBH+, BY+ rejects a non-trivial collection of sets, following Definition \ref{def: FDR}. The proof of Theorem \ref{thm: eBH} does not just apply to the set $\mathbf{R}_\alpha$ rejected by BY, but to any other self-consistent set, i.e., any set $[\mathbf{s}]$ for which $\mathbf{p}_\mathbf{s} \geq \mathbf{s}\alpha/(mh_m)$. All such sets, and others, are contained in $\mathcal{R}_\alpha(\mathbf{E})$ for BY+ (see also Section \ref{sec: flexible}).


\subsection{Su+: FDR control under the PRDN assumption}

\citet{su2018fdr} investigated FDR control by BH under the PRDN assumption. PRDN is a weaker assumption than the PRDS assumption that is sufficient for BH to be valid \citep{benjamini2001control}. The $p$-values $\mathbf{p}_1\ldots, \mathbf{p}_m$ satisfy PRDS in $M$ if, for every increasing set $A \subseteq \mathbb{R}^m$, for every $\mathrm{P} \in M$, and for every $i \in N_\mathrm{P}$, we have that
\[
\mathrm{P}((\mathbf{p}_1, \ldots, \mathbf{p}_m) \in A \mid \mathbf{p}_i = t) 
\]
is weakly increasing in $t$. PRDS, therefore, is an assumption both on the $p$-values of true and false hypotheses. PRDN, in contrast, is an analogous assumption only on the $p$-values of true hypotheses. The $p$-values $\mathbf{p}_1\ldots, \mathbf{p}_m$ satisfy PRDN in $M$ if, for every $\mathrm{P} \in M$, for every increasing set $A \subseteq \mathbb{R}^{|N_\mathrm{P}|}$, and for every $i \in N_\mathrm{P}$, we have that
\[
\mathrm{P}((\mathbf{p}_j)_{j \in N_\mathrm{P}} \in A \mid \mathbf{p}_i = t) 
\]
is weakly increasing in $t$. As argued by \cite{su2018fdr}, PRDN is a more attractive assumption than PRDS, since generally we do not want to assume anything about $p$-values of false hypotheses.

Where PRDS is the assumption under which BH was proven \citep{benjamini2001control}, PRDN is the sufficient assumption of the Simes test \citep{simes1986improved, su2018fdr}, a test that rejects $H_S$ when 
\[
\mathbf{p}_S = \min_{1\leq i \leq |S|} \frac{|S|\mathrm{p}_{i: S}}i \leq \alpha,
\]
where $\mathbf{p}_{i:S}$ is the $i$th smallest $p$-value among $p_i$, $i \in S$. \citet{su2018fdr} showed that there is a close connection between the BH procedure and the Simes test. 

Moreover, \citet{su2018fdr} proved that when BH is applied at level $q$ under the assumption of PRDN, rather than PRDS, it achieves an FDR of at most $q - q\log(q)$ rather than $q$. Solving 
\[
\alpha = q - q\log(q),
\]
we obtain $q = -\alpha/w(-\alpha/e)$,\footnote{Though we use $e$ both for $p$-to-$e$ calibrators (as a function) and for $\exp(1)$ (as a constant), this should lead to no confusion.} where $w$ is the $-1$ branch of the Lambert W function. It follows that it is possible to control FDR under PRDN by applying the BH procedure at level $\alpha l_\alpha$, where $l_\alpha = -1/w(-\alpha/e)$. We call this the Su procedure. It rejects $\mathbf{R}_\alpha = [\mathbf{r}_\alpha]$, where
\begin{equation} \label{eq: Su}
\mathbf{r}_\alpha = \max\{1\leq r \leq m\colon m\mathbf{p}_r \leq r \alpha l_\alpha\},
\end{equation}
or 0 if the maximum does not exist. For $\alpha = 0.01, 0.05, 0.1$, we have $l_\alpha = 0.131, 0.174, 0.205$, respectively.

In this section we will improve the Su procedure uniformly using the $e$-Partitioning Principle. First, we will define a useful $p$-to-$e$ calibrator, which will allow us to calibrate the Simes $p$-value.

\begin{lemma}
    $e(x) = \min(l_\alpha/x, 1/\alpha)$ is a $p$-to-$e$ calibrator for all $\alpha \in (0,1]$.
\end{lemma}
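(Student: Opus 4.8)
The plan is to verify the two defining conditions of a $p$-to-$e$ calibrator directly: that $e(x) \geq 0$ for all $x \in [0,1]$, and that $\mathrm{E}_\mathrm{P}(e(\mathbf{p})) \leq 1$ whenever $\mathbf{p}$ is a (valid, i.e. super-uniform) $p$-value under $\mathrm{P}$. Nonnegativity is immediate since $l_\alpha > 0$ for $\alpha \in (0,1]$ (recall $l_\alpha = -1/w(-\alpha/e)$ with $w$ the $-1$ branch of the Lambert W function, which is negative on the relevant range, so $l_\alpha > 0$), and the minimum of two nonnegative quantities is nonnegative; one should note $e(0) = 1/\alpha$ by the convention that $l_\alpha/0 = +\infty$.

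For the expectation bound, first I would observe that since $\mathbf{p} \leq 1$ surely, we may as well integrate over $[0,1]$. Write $e(x) = l_\alpha/x$ for $x \geq \alpha l_\alpha$ and $e(x) = 1/\alpha$ for $x < \alpha l_\alpha$ (the crossover point is where $l_\alpha/x = 1/\alpha$, i.e. $x = \alpha l_\alpha$; note $\alpha l_\alpha \leq 1$ since $l_\alpha \leq 1$, which follows from $l_\alpha = \alpha q / \alpha$... more simply from $q = \alpha l_\alpha \leq 1$ as $q$ solves $\alpha = q - q\log q$ with $q \in (0,1]$). Then, using that $\mathbf{p}$ is super-uniform so $\mathrm{P}(\mathbf{p} \leq t) \leq t$, the cleanest route is the layer-cake / tail-integral representation
\[
\mathrm{E}_\mathrm{P}(e(\mathbf{p})) = \int_0^\infty \mathrm{P}(e(\mathbf{p}) \geq u)\, du.
\]
For $u \leq 1/\alpha$, the event $\{e(\mathbf{p}) \geq u\}$ equals $\{\mathbf{p} \leq l_\alpha/u\}$ when $l_\alpha/u \leq 1$ (and is the whole space otherwise), so $\mathrm{P}(e(\mathbf{p}) \geq u) \leq \min(l_\alpha/u, 1)$; for $u > 1/\alpha$ the probability is $0$. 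Hence
\[
\mathrm{E}_\mathrm{P}(e(\mathbf{p})) \leq \int_0^{1/\alpha} \min(l_\alpha/u, 1)\, du = \int_0^{l_\alpha} 1\, du + \int_{l_\alpha}^{1/\alpha} \frac{l_\alpha}{u}\, du = l_\alpha + l_\alpha\big(\log(1/\alpha) - \log l_\alpha\big) = l_\alpha\big(1 - \log(\alpha l_\alpha)\big).
\]
So it remains to show $l_\alpha(1 - \log(\alpha l_\alpha)) \leq 1$.

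The main obstacle — really the only nontrivial point — is this last inequality, which is where the specific value $l_\alpha = -1/w(-\alpha/e)$ is used. I would prove it by substituting $q = \alpha l_\alpha$, which by construction satisfies $\alpha = q(1 - \log q)$, equivalently $\log q = 1 - \alpha/q$. Then $1 - \log(\alpha l_\alpha) = 1 - \log q = \alpha/q$, so $l_\alpha(1 - \log(\alpha l_\alpha)) = (q/\alpha)(\alpha/q) = 1$. In fact the inequality holds with equality, confirming the calibrator is tight. I would present this computation as the heart of the proof, after dispatching nonnegativity and setting up the tail-integral bound; care is needed only to handle the truncation regions correctly and to record that $\alpha l_\alpha \in (0,1]$ so that all the logarithms and integration limits make sense.
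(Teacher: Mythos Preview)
Your proof is correct. The overall shape matches the paper's: both reduce to computing $l_\alpha - l_\alpha\log(\alpha l_\alpha)$ and showing it equals $1$. There are two minor but genuine differences worth noting. First, where the paper integrates $e(p)$ directly over $p\in[0,1]$ (implicitly using that a nonincreasing function of a super-uniform variable has expectation bounded by the uniform case), you use the layer-cake representation and bound the tail $\mathrm{P}(e(\mathbf{p})\geq u)$ via super-uniformity explicitly; your route makes the handling of non-uniform $p$-values more transparent. Second, for the key identity the paper invokes the Lambert $W$ relation $w(x)=x/\exp(w(x))$ to get $\log(e/(\alpha l_\alpha))=1/l_\alpha$, while you substitute $q=\alpha l_\alpha$ into its defining equation $\alpha=q(1-\log q)$; your computation is the more elementary of the two and avoids any appeal to properties of $W$. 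Both arrive at equality, so the calibrator is sharp.
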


\begin{proof}
The Lambert W function has the property that $w(x) = x/\exp(w(x))$. Therefore,
\[
\log\Big(\frac{e}{l_\alpha\alpha}\Big) = \log\Big(-\frac{e}{\alpha}w\big(-\frac{\alpha}{e}\big)\Big) = 
w\big(-\frac{\alpha}{e}\big) = \frac{1}{l_\alpha}.
\]
Let $\mathbf{p}$ be a $p$-value and $\mathbf{e} = e(\mathbf{p})$. 
Then 
\[
\mathrm{E}(\mathbf{e}) \leq \int_0^{\alpha l_\alpha} \frac1\alpha\, dp + \int_{\alpha l_\alpha}^1 \frac{l_\alpha}{p}\, dp = l_\alpha - l_\alpha \log(l_\alpha\alpha) = l_\alpha\log\Big(\frac{e}{l_\alpha\alpha}\Big) = 1.
\]
\end{proof}

\begin{corollary}
For all $\mathrm{P} \in H_S$, 
\begin{equation} \label{eq: e Su}
    \mathbf{e}_S =  \min(l_\alpha/\mathbf{p}_S, 1/\alpha)
\end{equation}    
is an $e$-value under PRDN.
\end{corollary}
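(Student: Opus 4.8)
The plan is to combine the $p$-to-$e$ calibrator from the preceding lemma with the validity of the Simes $p$-value under PRDN. The key observation is that, under $H_S$ and the PRDN assumption, the Simes statistic $\mathbf{p}_S = \min_{1 \le i \le |S|} |S| \mathbf{p}_{i:S}/i$ is a genuine $p$-value for $H_S$: this is exactly the content of the Simes test being valid under PRDN, as established by \citet{simes1986improved} and \citet{su2018fdr}. So the first step is simply to invoke that $\mathrm{P}(\mathbf{p}_S \le t) \le t$ for all $t \in [0,1]$ and all $\mathrm{P} \in H_S$, using that the $p$-values of the true hypotheses (those indexed by $S = N_\mathrm{P}$) satisfy PRDN.

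The second step is to apply the $p$-to-$e$ calibrator $e(x) = \min(l_\alpha/x, 1/\alpha)$ from the previous lemma to the Simes $p$-value $\mathbf{p}_S$. Since $e(\cdot)$ converts any $p$-value into an $e$-value, and $\mathbf{p}_S$ is a $p$-value under $H_S$ (by step one), the composition $\mathbf{e}_S = e(\mathbf{p}_S) = \min(l_\alpha/\mathbf{p}_S, 1/\alpha)$ satisfies $\mathbf{e}_S \ge 0$ and $\mathrm{E}_\mathrm{P}(\mathbf{e}_S) \le 1$ for all $\mathrm{P} \in H_S$. That is precisely the definition of $\mathbf{e}_S$ being an $e$-value for $H_S$, which is the claim of the corollary.

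I do not expect any genuine obstacle here: the corollary is essentially a one-line consequence of the lemma plus the known PRDN-validity of Simes. The only point requiring a modicum of care is making explicit that PRDN is the right notion to invoke — PRDN constrains only the joint law of $(\mathbf{p}_j)_{j \in N_\mathrm{P}}$, and since the Simes statistic $\mathbf{p}_S$ for $S = N_\mathrm{P}$ depends only on those $p$-values, the Simes inequality applies. A one- or two-sentence proof citing the lemma and \citet{su2018fdr} (or \citet{simes1986improved}) for the PRDN validity of the Simes $p$-value is all that is needed.
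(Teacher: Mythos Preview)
Your proposal is correct and is exactly the approach the paper takes: the corollary is presented without a separate proof because it follows immediately from the preceding lemma (the calibrator $e(x)=\min(l_\alpha/x,1/\alpha)$) combined with the earlier remark that the Simes $p$-value $\mathbf{p}_S$ is valid under PRDN. Your care in noting that $H_S$ fixes $N_\mathrm{P}=S$, so that PRDN governs precisely the $p$-values entering $\mathbf{p}_S$, is the only subtlety, and you handle it correctly.
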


We can combine the $e$-values from (\ref{eq: e Su}) into a suite $\mathbf{E} = (\mathbf{e}_S)_{S \in \mathcal{M}}$ and use the general $e$-Partitioning procedure (\ref{eq: partitioning}). We call the resulting procedure Su+ because it uniformly improves the Su procedure. This is proven in the following theorem.

\begin{theorem}
If $m > 1$, Su+ uniformly improves Su.
\end{theorem}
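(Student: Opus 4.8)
The plan is to follow exactly the same two-part template used in the proofs that Su+ builds on, namely the proofs that eBH+ improves eBH and that BY+ improves BY. Part one shows $\mathbf{S} = \mathbf{R}_\alpha \in \boldsymbol{\mathcal{R}}_\alpha(\mathbf{E})$, i.e. that the Su rejection set always satisfies the defining inequality of the $e$-partitioning procedure; part two exhibits an explicit event on which some strictly larger set is also in $\boldsymbol{\mathcal{R}}_\alpha(\mathbf{E})$.

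For part one, write $\mathbf{R}_\alpha = [\mathbf{r}_\alpha]$ as in (\ref{eq: Su}), so by definition of $\mathbf{r}_\alpha$ we have $m\mathbf{p}_i \le |\mathbf{R}_\alpha|\,\alpha l_\alpha$, hence $\mathbf{p}_i \le |\mathbf{R}_\alpha|\alpha l_\alpha / m$, for every $i \in \mathbf{R}_\alpha$. Fix an arbitrary $S \in \mathcal{M}$; assume $\mathbf{R}_\alpha \ne \emptyset$ (the empty case being trivial). The key observation is that the Simes statistic $\mathbf{p}_S$ for the subset $S$ can be bounded above using only the coordinates in $S \cap \mathbf{R}_\alpha$: taking the $k = |S \cap \mathbf{R}_\alpha|$ smallest $p$-values among $\{\mathbf{p}_i : i \in S\}$, each of those is at most $|\mathbf{R}_\alpha|\alpha l_\alpha/m$, so the Simes minimum satisfies $\mathbf{p}_S \le |S|\cdot \mathbf{p}_{k:S}/k \le |S||\mathbf{R}_\alpha|\alpha l_\alpha/(m k)$, which gives $l_\alpha/\mathbf{p}_S \ge mk/(|S||\mathbf{R}_\alpha|\alpha)$. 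Combining with the truncation, $\alpha\mathbf{e}_S = \min(\alpha l_\alpha/\mathbf{p}_S,\,1) \ge \min\!\big(mk/(|S||\mathbf{R}_\alpha|),\,1\big) \ge k/|\mathbf{R}_\alpha| = |S\cap\mathbf{R}_\alpha|/|\mathbf{R}_\alpha|$, where the last inequality uses $m \ge |S|$ so that $mk/(|S||\mathbf{R}_\alpha|) \ge k/|\mathbf{R}_\alpha|$, and $1 \ge k/|\mathbf{R}_\alpha|$ since $k \le |\mathbf{R}_\alpha|$. This verifies $\mathbf{R}_\alpha \in \boldsymbol{\mathcal{R}}_\alpha(\mathbf{E})$ for all $S$, i.e. condition (1.) of Definition \ref{def: uniform}.

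For part two, with $m > 1$ it suffices to produce one data event on which a set strictly containing $\mathbf{R}_\alpha$ lies in $\boldsymbol{\mathcal{R}}_\alpha(\mathbf{E})$. Mirroring the BY+ construction, I would take an event of the form $\mathbf{p}_1 = \alpha l_\alpha/m$, $\mathbf{p}_2$ chosen just large enough that Su still does not include index $2$ (e.g. $\mathbf{p}_2$ slightly above $2\alpha l_\alpha/m$) yet small enough that for every $S$ with $2 \in S$ one has $l_\alpha/\mathbf{p}_S$ large enough to cover the required fraction, and $\mathbf{p}_3 = \dots = \mathbf{p}_m = 1$. Then $\mathbf{R}_\alpha = \{1\}$ but $\{1,2\}$ satisfies the defining inequalities of (\ref{eq: partitioning}): for $S$ with $1 \in S$ we get $\alpha\mathbf{e}_S = 1 \ge |S \cap \{1,2\}|/2$, and for $S$ with $2 \in S$ but $1 \notin S$ the bound $\mathbf{p}_S \le |S|\mathbf{p}_2/(\text{rank of }2\text{ in }S)$ together with the value of $\mathbf{p}_2$ forces $\alpha\mathbf{e}_S \ge 1/2 \ge |S \cap \{1,2\}|/2$; for $S$ meeting neither index the right side is $0$. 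One must double-check that the chosen $\mathbf{p}_2$ genuinely keeps index $2$ out of the Su set (it does, since $m\mathbf{p}_2 > 2\alpha l_\alpha$ fails the threshold) while keeping the worst-case $S$ — which is $S$ containing $2$ at its largest possible Simes rank — still admissible; this calibration of the single number $\mathbf{p}_2$ is the only delicate point. The main obstacle is thus not conceptual but purely arithmetic: pinning down the constants in the witnessing event so that the inequalities are simultaneously tight enough to exclude index $2$ from Su and slack enough to admit $\{1,2\}$ into the $e$-partitioning collection, exactly as in the analogous eBH+ and BY+ proofs.
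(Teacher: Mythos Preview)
Your proposal is correct and follows essentially the same route as the paper. Part one is identical in substance: the paper bounds $\mathbf{p}_S \le |S|\mathbf{p}_{|\mathbf{R}_\alpha\cap S|:S}/|\mathbf{R}_\alpha\cap S| \le |\mathbf{R}_\alpha| l_\alpha\alpha/|\mathbf{R}_\alpha\cap S|$ exactly as you do, then reads off $\alpha\mathbf{e}_S \ge |\mathbf{R}_\alpha\cap S|/|\mathbf{R}_\alpha|$. For part two the paper uses the explicit value $\mathbf{p}_2 = 2l_\alpha\alpha/(m-1)$ (the right endpoint of the admissible interval $(2l_\alpha\alpha/m,\,2l_\alpha\alpha/(m-1)]$ that your constraints implicitly carve out), so your ``purely arithmetic'' calibration resolves cleanly there; note that the worst-case $S$ is the one with $1\notin S$ and $|S|=m-1$, where the rank of index $2$ is actually $1$, not its ``largest possible Simes rank'' as you phrased it.
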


\begin{proof}
Let $\mathcal{R}_\alpha(\mathbf{E})$ ($=$ Su+) be defined using (\ref{eq: e Su}) and  $\mathbf{R}_\alpha$ ($=$ Su) be defined just above (\ref{eq: Su}). By definition of $\mathbf{r}_\alpha$, we have $\mathbf{p}_i \leq |\mathbf{R}_\alpha|l_\alpha\alpha/m$ for all $i \in \mathbf{R}_\alpha$. Choose any $S \in \mathcal{M}$ such that $S \cap \mathbf{R}_\alpha \neq \emptyset$. Then
\[
\mathbf{p}_S \leq \mathbf{p}_{|\mathbf{R}_\alpha \cap S|:S}\frac{|S|}{|\mathbf{R}_\alpha\cap S|}
\leq \mathbf{p}_{|\mathbf{R}_\alpha|} \frac{ |S|}{|\mathbf{R}_\alpha\cap S|}
\leq \frac{|\mathbf{R}_\alpha|l_\alpha\alpha}m \frac{|S|}{|\mathbf{R}_\alpha\cap S|}  
\leq \frac{|\mathbf{R}_\alpha|l_\alpha\alpha}{|\mathbf{R}_\alpha\cap S|}.
\]
It follows that $\alpha\mathbf{e}_S \geq |\mathbf{R}_\alpha\cap S|/|\mathbf{R}_\alpha|$ if $S \cap \mathbf{R}_\alpha \neq \emptyset$ and $\mathbf{p}_S \geq l_\alpha\alpha$. The same is trivially true if either $S \cap \mathbf{R}_\alpha = \emptyset$ or $\mathbf{p}_S < l_\alpha\alpha$. Therefore, we have $\mathbf{R}_\alpha \in \mathcal{R}_\alpha(\mathbf{E})$.

To show actual improvement, let $m>1$ and consider the event that $\mathbf{p}_1 = l_\alpha\alpha/m$, $\mathbf{p}_2 = 2l_\alpha\alpha/(m-1)$, $\mathbf{p}_3 = \ldots, \mathbf{p}_m=1$. Then, $\mathbf{R}_\alpha = \{1\}$. However, $1 \in S$ implies that $\mathbf{p}_S = l_\alpha\alpha|S|/m$, so $\mathbf{e}_S = 1/\alpha$, and $2 \in S$ implies $\mathbf{p}_S \leq 2l_\alpha\alpha|S|/(m-1)$, so $\mathbf{e}_S \geq 1/(2\alpha)$. It follows that $\mathcal{R}_\alpha(\mathbf{E}) = \big\{\{1\}, \{1,2\}\big\}$.
\end{proof}

The following example gives an event in which Su rejects only one hypothesis, but Su+ rejects all.

\begin{example}
Let $m > 1$, and consider the event that
\[
p_1 = \frac{l_\alpha \alpha}{m}, \quad \frac{m l_\alpha \alpha}{m - 1} \leq p_2 = \cdots = p_m < \frac{m l_\alpha \alpha}m.
\]
Then Su rejects only one hypothesis, but Su+ rejects $[m]$
\end{example}

\begin{proof}
That Su rejects $[1]$ only is immediate from the definition. Su+ rejects $[m]$, since we have $\mathbf{e}_S = 1/\alpha$ if $1 \in S$, and $\mathbf{e}_S = (m - 1)/(m \alpha)$ if $1 \notin S$. 
\end{proof}


\section{Restricted combinations} \label{sec: Shaffer}

Thus far, we have not made use of the special properties of the partitioning hypotheses. We have created $e$-values for intersection hypotheses $\tilde H_S$ and used then as $e$-values for partitioning hypotheses $H_S$ using Lemma \ref{lem: closure}. However, the use of partitioning hypotheses rather than intersection hypotheses may gain power in certain situations, as is well-known in the context of FWER control \citep{shafer2021testing, finner2002partitioning}. We will now explain how this power gain extends to FDR control through the $e$-Partitioning Principle. 

The partitioning hypothesis $H_S = \{\mathrm{P} \in M\colon N_\mathrm{P}=S\}$ is, in general, smaller than the intersection hypothesis $\tilde H_S=\{\mathrm{P} \in M\colon N_\mathrm{P}\subseteq S\}$. This difference, however, is not always appreciable. For example, suppose $m=2$ and the two hypotheses are $H_1\colon \theta_1=0$ and $H_2\colon \theta_2=0$. Then for $S=\{1\}$ we have $H_S = \{\mathrm{P} \in M\colon \theta_1=0 \textrm{ and } \theta_2\neq 0\}$, while $\tilde H_S = \{\mathrm{P} \in M\colon \theta_1=0\} = H_1$. In most statistical models, it is not possible to construct a more powerful test for $H_{\{1\}}$ than for $H_1$. 

This changes, however, when $H_1$ and $H_2$ are interval hypotheses. If we have $H_1\colon \theta_1\leq 0$ and $H_2\colon \theta_2 \leq 0$, then $H_{\{1\}} = \{\mathrm{P} \in M\colon \theta_1\leq 0 \textrm{ and } \theta_2 > 0\}$ is appreciably smaller than $H_1$, and, as a consequence, we may often formulate a more powerful (stochastically larger) $e$-value for $H_{\{1\}}$ than for $H_1$.

Logical relationships between hypotheses \citep[restricted combinations;][]{shaffer1986modified} can be seen as an extreme case of the same phenomenon, where $H_S$ can even become $\emptyset$ for some $S$. For example, suppose we have $m=3$ and $H_1\colon \theta_1=\theta_2$, $H_2\colon \theta_1=\theta_3$ and $H_3\colon \theta_2=\theta_3$. Then we have $H_{\{1,3\}} = \{\mathrm{P} \in M\colon \theta_1=\theta_2=\theta_3  \textrm{ and } \theta_1 \neq \theta_3\} = \emptyset$. Consequently, $\mathbf{e}_{\{1,3\}} = \infty$ is a valid $e$-value for this hypothesis, and there is no need to take any stochastically smaller $e$-value based on the data.

Taking restricted combinations into account can therefore lead to substantial gains in power. For example, if we test the $m=6$ pairwise comparisons among four parameters $\theta_1, \theta_2, \theta_3, \theta_4$, then out of 63 hypotheses in $\mathcal{M}$, only 13 are non-empty; the other 50 can essentially be ignored by th $e$-Partitioning procedure. Suppose that the hypotheses $H_1: \theta_1=\theta_2$, has an $e$-value of $4/\alpha$, $H_2: \theta_1=\theta_3$ and $H_3: \theta_1=\theta_4$ both have $e$-values of $1/\alpha$, while the other three hypotheses we have $e$-values of 0, then eBH+ would only reject $\mathbf{R} = \{1\}$ when restricted combinations would not be taken into account, but could additionally reject $\mathbf{R} = \{1,2,3\}$ (and all subsets) when they are.

\section{Human-in-the-loop post hoc FDR control} \label{sec: flexible}

Classical FDR control according to Definition \ref{def: classical FDR} provides the researcher of the method with a single random set of hypotheses that they are allowed to reject. Generally, this is the largest set of a pre-specified form that allows FDR control. For example, in BH this set consists of the hypotheses with the $1, 2, \ldots, \mathbf{r}$th smallest $p$-values, for some random $\mathbf{r}$. For Knockoff-based inference they are the set with the $1, 2, \ldots, \mathbf{r}$th knockoff weights. Methods generally return the largest set $\mathbf{R}$ of the chosen form.

Classical FDR control (Definition \ref{def: classical FDR}) guarantees that FDR remains bounded by $\alpha$ only for the set $\mathbf{R}$ returned by the method. However, as argued by \citet{goeman2011multiple}, in applied contexts there may be reasons for a researcher to deviate from that set. For example, the set $\mathbf{R}$ may turn out to be too large, since the researcher may only have a limited budget for follow-up experiments. Alternatively, some of the smallest $p$-values may be suspect, or less interesting, because of secondary characteristics. An important example of this is the volcano plot method popular in bioinformatics, in which researchers take only a subset of the FDR-significant results, discarding the findings with small effect size estimates \citep{ebrahimpoor2021inflated, enjalbert2022powerful}. Such post hoc discarding of results destroys FDR control, however, as the guarantee of Definition \ref{def: classical FDR} is for $\mathbf{R}$ only. \cite{finner2001false} showed how researchers can, intentionally or not, ``cheat with FDR'' by reducing the set $\mathbf{R}$ post hoc. Even reducing the set $\mathbf{R}$ while retaining its pre-specified form, e.g.\ rejecting a set of the $\mathbf{s} < \mathbf{r}$ smallest $p$-values in BH, compromises FDR control if $\mathbf{s}$ is chosen post hoc \citep{katsevich2020simultaneous}.

Deviation from the set $\mathbf{R}$ of the $\mathbf{r}$ smallest $p$-values has been investigated by several authors. However, in most cases such deviation has to be pre-specified, and the researcher must commit to an algorithm for reducing or changing the set $\mathbf{R}$, before seeing the data, i.e., independently of the data \citep{lei2018adapt, katsevich2023filtering}. Some methods allow some interactive post hoc amending of this pre-chosen algorithm based on progressively revealing part of the data \citep{lei2021general, katsevich2020simultaneous}. As far as we are aware, no FDR control methods have so far been proposed that allow researchers to reduce or amend $\mathbf{R}$ after looking at all of the data. Notably, methods controlling tail probabilities of FDP, rather than FDR, all get some post hoc flexibility for free by the Closure Principle \citep{goeman2011multiple, goeman2021only}.

Post hoc flexibility comes as a direct and free consequence of the $e$-Partitioning Principle through its FDR guarantee according to Definition \ref{def: FDR}. Since FDR is controlled over the maximum over all sets in $\mathcal{R}_\alpha(\mathbf{E})$, the researcher is allowed to choose the final rejected set from among the collection $\mathcal{R}_\alpha(\mathbf{E})$ in any desired way, using all information available. Control of FDR according to Definition \ref{def: FDR} is simultaneous over the sets in $\mathcal{R}_\alpha(\mathbf{E})$, and can be used much like simultaneous confidence intervals are. The $e$-Partitioning Principle, therefore, addresses the ``cheating with FDR'' phenomenon in a clear and effective way, since it specifies exactly in what way the researcher may reduce the optimal set $\mathbf{R}$: reductions to $\mathbf{S}$ retain FDR control if and only if $\mathbf{S} \in \mathcal{R}_\alpha(\mathbf{E})$. For the methods we have considered above, eBH+, BY+ and Su+, the collection $\mathcal{R}_\alpha(\mathbf{E})$ is often rich enough to allow plenty of choice for researchers.

Simultaneous control of FDR as offered by Definition \ref{def: FDR} is not just useful for reducing a single FDR-significant set to a single smaller one, but also for the possibility of splitting it into several smaller sets. This can be relevant in fields such as bioinformatics and neuroimaging. In neuroimaging, hypotheses correspond to voxels (3d equivalents of pixels) in the brain, and the significant set $\mathbf{R}$ often splits naturally into several brain areas (``clusters''). In such situations, FDR control on the total set $\mathbf{R}$ is not very informative, since researchers will interpret their results in terms of the clusters \citep{rosenblatt2018all}. Simultaneous FDR control using the $e$-Partitioning Principle allows researchers to claim that these clusters have FDR at most $\alpha$ if and only if these clusters are in $\mathcal{R}_\alpha(\mathbf{E})$. Similar considerations apply in bioinformatics, where hypotheses correspond to molecular markers, which can be meaningfully grouped into sets called pathways \citep{ebrahimpoor2020simultaneous}. FDR inference based on Definition \ref{def: FDR} allows researchers to make claims about such pathways, where classical FDR inference based on Definition \ref{def: classical FDR} does not.

Of special interest are the singleton sets in $\mathcal{R}_\alpha(\mathbf{E})$. Since singleton sets have an FDP of either 0 or 1, there is no distinction between FDR and FWER for such sets. This has the following important implication, made explicit in Theorem \ref{thm: FWER}. The theorem says that when controlling FDR, if the signal is strong enough and the collection $\boldsymbol{\mathcal{R}}$ contains one or more singleton sets, the researcher may choose to reject the union of all these singleton sets and control FWER in addition to FDR. This switch from FDR control to FWER control may be made fully post hoc, after observing all the data and the resulting rejected collection $\boldsymbol{\mathcal{R}}$. 

\begin{theorem} \label{thm: FWER}
Suppose $\boldsymbol{\mathcal{R}}$ controls FDR at level $\alpha$. Define 
\[
\mathbf{R} = \{i \in [m]\colon \{i\} \in \boldsymbol{\mathcal{R}}\}.
\]
Then $\mathbf{R}$ controls FWER at level $\alpha$, that is, for every $\mathrm{P} \in M$,
\[
\mathrm{P}(\mathbf{R} \cap N_\mathrm{P} = \emptyset) \geq 1-\alpha. 
\]
\end{theorem}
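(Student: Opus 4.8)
Fix $\mathrm{P} \in M$. Since FWER control is the statement $\mathrm{P}(\mathbf{R} \cap N_\mathrm{P} = \emptyset) \geq 1-\alpha$, equivalently $\mathrm{P}(\mathbf{R} \cap N_\mathrm{P} \neq \emptyset) \leq \alpha$, the plan is to dominate the ``bad'' event $\{\mathbf{R} \cap N_\mathrm{P} \neq \emptyset\}$ by an event on which the simultaneous FDP is large, and then apply Markov's inequality together with Definition \ref{def: FDR}.

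The key step is the following deterministic (i.e., surely holding) inclusion of events:
\[
\{\mathbf{R} \cap N_\mathrm{P} \neq \emptyset\} \ \subseteq\ \Big\{ \max_{R \in \boldsymbol{\mathcal{R}}} \frac{|R \cap N_\mathrm{P}|}{|R| \vee 1} \geq 1 \Big\}.
\]
Indeed, suppose the event on the left occurs, and pick any index $i \in \mathbf{R} \cap N_\mathrm{P}$. By definition of $\mathbf{R}$, membership $i \in \mathbf{R}$ means exactly that $\{i\} \in \boldsymbol{\mathcal{R}}$. Since also $i \in N_\mathrm{P}$, the singleton set $R = \{i\}$ has $|R \cap N_\mathrm{P}| = 1$ and $|R| \vee 1 = 1$, so its FDP equals $1$; hence the maximum over $R \in \boldsymbol{\mathcal{R}}$ is at least $1$, which is the event on the right. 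This is where the \emph{singleton} structure is used: a singleton can only have FDP $0$ or $1$, so a single true null among the $\{i\}\in\boldsymbol{\mathcal{R}}$ already forces the simultaneous FDP up to $1$.

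Finally, write $\mathbf{F} = \max_{R \in \boldsymbol{\mathcal{R}}} \frac{|R \cap N_\mathrm{P}|}{|R| \vee 1} \geq 0$. By the inclusion above and Markov's inequality,
\[
\mathrm{P}(\mathbf{R} \cap N_\mathrm{P} \neq \emptyset) \ \leq\ \mathrm{P}(\mathbf{F} \geq 1) \ \leq\ \mathrm{E}_\mathrm{P}(\mathbf{F}) \ \leq\ \alpha,
\]
where the last inequality is precisely the simultaneous FDR bound of Definition \ref{def: FDR}. Rearranging gives $\mathrm{P}(\mathbf{R} \cap N_\mathrm{P} = \emptyset) \geq 1-\alpha$, as claimed. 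I expect no real obstacle here: the only thing to get right is the event inclusion and the observation that it is the \emph{simultaneous} Definition \ref{def: FDR} (not the classical Definition \ref{def: classical FDR}) that is needed, since we must control the FDP of all the singleton sets in $\boldsymbol{\mathcal{R}}$ at once rather than of one prespecified set; everything else is the one-line Markov argument already used for single $e$-values in Section \ref{sec: principle}.
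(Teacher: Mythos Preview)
Your proof is correct and follows essentially the same approach as the paper: both exploit that singletons have FDP equal to $0$ or $1$, so that $1\{\mathbf{R}\cap N_\mathrm{P}\neq\emptyset\}\leq \max_{R\in\boldsymbol{\mathcal{R}}}\frac{|R\cap N_\mathrm{P}|}{|R|\vee 1}$, and then take expectations using Definition~\ref{def: FDR}. The only cosmetic difference is that the paper writes this bound directly as a chain of (in)equalities on expectations, whereas you phrase it as an event inclusion followed by Markov's inequality at threshold $1$; these are the same step.
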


\begin{proof}
Let $\boldsymbol{\mathcal{S}} = \{S \in \boldsymbol{\mathcal{R}}\colon |S|=1\}$. Then, for every $\mathrm{P} \in M$,
\[
\mathrm{P}(\mathbf{R} \cap N_\mathrm{P} \neq \emptyset) =
\mathrm{E}_\mathrm{P}\big( \max_{i \in \mathbf{R}} 1\{i \in N_\mathrm{P}\}\big) =
\mathrm{E}_\mathrm{P}\bigg( \max_{R \in \boldsymbol{\mathcal{S}}} \frac{|R \cap N_\mathrm{P}|}{|R| \vee 1} \bigg)  \leq 
\mathrm{E}_\mathrm{P}\bigg( \max_{R \in \boldsymbol{\mathcal{R}}} \frac{|R \cap N_\mathrm{P}|}{|R| \vee 1} \bigg)  \leq \alpha.
\]
\end{proof}

\section{Post hoc choice of $\alpha$} \label{sec: alpha}

Theorem \ref{thm: FWER} gives researchers the exciting option of switching from FDR control to FWER after seeing the data. In this section we shall see that, for certain $e$-Partitioning procedures, there are additional options for adapting the error rate to the data. To achieve this, we will extend the Definition \ref{def: FDR} of FDR control further, building on the work of \cite{grunwald2024beyond} and \cite{koning2023post}.

\begin{definition}[FDR control with post hoc $\alpha$] \label{def: FDR alpha}
For every $\alpha \in (0,1]$, let $\boldsymbol{\mathcal{R}}_\alpha \subseteq 2^{[m]}$. Then $(\boldsymbol{\mathcal{R}}_\alpha)_{\alpha \in (0,1]}$, controls FDR with post hoc $\alpha$ if, for every $\mathrm{P} \in M$,
\[
\mathrm{E}_\mathrm{P}\bigg( \sup_{\alpha \in (0,1]} \max_{R \in \boldsymbol{\mathcal{R}}_\alpha} \frac{|R \cap N_\mathrm{P}|}{\alpha(|R| \vee 1)} \bigg)  \leq 1.
\]
\end{definition}

Where FDR control according to Definition \ref{def: FDR} allows the researcher to choose $\mathbf{R} \in \boldsymbol{\mathcal{R}}_\alpha$ freely for a pre-chosen $\alpha$, FDR control according to Definition \ref{def: FDR alpha} allows the researcher to choose $\boldsymbol{\alpha}$ freely and post hoc, and $\mathbf{R} \in \boldsymbol{\mathcal{R}_\alpha}$ within the sets on offer for that $\boldsymbol{\alpha}$. FDR control with post hoc $\alpha$ implies FDR control in the sense of Definition \ref{def: FDR} with random $\boldsymbol{\alpha}$. Rather than fixing $\alpha$ in advance, Definition \ref{def: FDR alpha}, therefore, allows $\boldsymbol{\alpha}$ to be a random variable that depends on the data in any desired way.

FDR control with post hoc $\alpha$, though seemingly more complicated, follows more or less directly from the $e$-Partitioning Principle. It only adds the additional requirement that the suite $\mathbf{E}$ of $e$-values does not depend on $\alpha$, as the following theorem asserts. The proof is essentially identical to that of Theorem \ref{thm: principle}. Theorem \ref{thm: alpha} extends the corresponding result of \citet[Theorem 9.10]{ramdas2024hypothesis} that $\alpha$ can be chosen post hoc in eBH to general $e$-Partitioning procedures and to simultaneous FDR control. 

\begin{theorem} \label{thm: alpha}
Suppose $\mathbf{E}$ does not depend on $\alpha$. Then $(\mathcal{R}_\alpha(\mathbf{E}))_{\alpha \in (0,1]}$ controls FDR with post hoc $\alpha$.    
\end{theorem}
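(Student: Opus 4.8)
The plan is to mimic the proof of Theorem~\ref{thm: principle} almost verbatim, tracking where $\alpha$ enters. Fix $\mathrm{P} \in M$. If $N_\mathrm{P} = \emptyset$ the quantity inside the expectation is $0$ and the bound holds trivially, so assume $S := N_\mathrm{P} \in \mathcal{M}$. Since $\mathbf{E}$ does not depend on $\alpha$, the single $e$-value $\mathbf{e}_S$ is a fixed random variable with $\mathrm{E}_\mathrm{P}(\mathbf{e}_S) \leq 1$ under $\mathrm{P} \in H_S$.

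The key step is the pointwise (surely-holding) inequality
\[
\sup_{\alpha \in (0,1]}\, \max_{R \in \mathcal{R}_\alpha(\mathbf{E})} \frac{|R \cap N_\mathrm{P}|}{\alpha(|R| \vee 1)} \leq \mathbf{e}_S.
\]
To see this, take any $\alpha \in (0,1]$ and any $R \in \mathcal{R}_\alpha(\mathbf{E})$. By the defining condition~(\ref{eq: partitioning}) applied with this particular $S = N_\mathrm{P} \in \mathcal{M}$, we have $\alpha \mathbf{e}_S \geq |R \cap S|/(|R| \vee 1) = |R \cap N_\mathrm{P}|/(|R| \vee 1)$, hence $|R \cap N_\mathrm{P}|/(\alpha(|R| \vee 1)) \leq \mathbf{e}_S$. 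Since this holds for every admissible $R$ and every $\alpha$, taking the max over $R$ and then the sup over $\alpha$ preserves the bound --- crucially, the right-hand side $\mathbf{e}_S$ is the \emph{same} random variable for all $\alpha$, which is exactly what the hypothesis ``$\mathbf{E}$ does not depend on $\alpha$'' buys us.

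Finally, take expectations on both sides: by monotonicity of $\mathrm{E}_\mathrm{P}$ and $\mathrm{E}_\mathrm{P}(\mathbf{e}_S) \leq 1$, the left-hand side has expectation at most $1$, which is precisely the condition in Definition~\ref{def: FDR alpha}. The only subtlety worth a remark is measurability of the supremum over the uncountable index set $\alpha \in (0,1]$; this is not a real obstacle because the supremum is dominated by $\mathbf{e}_S$ and, in fact, for fixed $R$ the map $\alpha \mapsto |R \cap N_\mathrm{P}|/(\alpha(|R|\vee 1))$ is monotone, so the sup over $\alpha$ can be reduced to a countable operation (e.g.\ a sup over rational $\alpha$, or simply bounded above by the envelope $\mathbf{e}_S$ which is all we need for the expectation bound). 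I expect this measurability bookkeeping to be the only place requiring any care; the core argument is a one-line reduction to the validity of the single $e$-value $\mathbf{e}_{N_\mathrm{P}}$, exactly as in Theorem~\ref{thm: principle}.
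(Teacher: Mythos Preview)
Your proof is correct and follows essentially the same route as the paper's own proof: fix $S=N_\mathrm{P}$, use the defining inequality in (\ref{eq: partitioning}) to bound the inner quantity by $\mathbf{e}_S$ uniformly in $\alpha$, then take expectations. The paper's version is terser (it omits the $N_\mathrm{P}=\emptyset$ case and the measurability remark), but the argument is identical.
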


\begin{proof}
Choose any $\mathrm{P} \in M$, and let $S=N_\mathrm{P}$. Then $\mathrm{P} \in H_S$, so that
\[
\mathrm{E}_\mathrm{P}\bigg( \sup_{\alpha \in (0,1]} \max_{R \in \boldsymbol{\mathcal{R}}_\alpha} \frac{|R \cap N_\mathrm{P}|}{\alpha(|R| \vee 1)} \bigg) \leq 
\mathrm{E}_\mathrm{P} (\mathbf{e}_S) \leq 1.
\]
\end{proof}



\section{Computation} \label{sec: computation}

Checking whether $R \in \mathcal{R}_\alpha(\mathbf{E})$ involves checking exponentially many $e$-values and may therefore take exponential time. However, in special cases, most notably for eBH+, BY+ and Su+ methods proposed above, computation reduces to polynomial time.

It is easy to check that the compound $e$-values of BY+ and Su+ have the property that they are weakly decreasing in the per-hypothesis $p$-values. We can exploit this in the spirit of the shortcuts for closed testing by \citet{dobriban2020fast}. To check for a set $R$ whether $\alpha\mathbf{e}_S \geq |R \cap S|/|R|$, it suffices to check, for each $1\leq a \leq |R|$ and each $0 \leq b \leq m-|R|$, whether the condition holds for the set $S$ consisting of the indices of $a$ largest $p$-values in $R$ and of the $b$ largest $p$-values in $[m]\setminus R$. This takes $O(m^2)$ evaluations of $\mathbf{e}_{S}$. Finding, e.g., the largest set $[r]$ that is rejected by the method then takes $O(m^3)$ such evaluations.

For eBH+ we can do the calculations even faster. We will illustrate this for sets $R$ of the form $[r]$. Define
\[
f_k = \sum_{i=1}^k \mathbf{e}_i.
\]
Then we have $\alpha\mathbf{e}_S \geq |R \cap S|/|R|$, for all $S \in \mathcal{M}$, if and only if, for all $0\leq a< r$ and $r\leq b\leq m$, 
\[
g(a,r,b)  = f_m - f_b + f_r - f_a - \frac{(m-b+r-a)(r-a)}{r\alpha} \geq 0.
\]
It is easily checked that $g(a,r,b)$ is convex in $a$. Therefore, for fixed $r$ and $b$, the minimum of $g$ can be found in $O(\log m) $ time. We can therefore check whether $R \in \mathcal{R}_\alpha(\mathbf{E})$ in $O(m\log m)$ time, and find the largest such $R$ in $O(m^2\log m)$ time by trying all values of $r$ from $m$ downward. In practice, if $r$ is substantially larger than the size of the largest rejected $R$, one quickly finds $a,b$ for which $g(a,r,b)<0$, so that computation time for finding the largest rejected $R$ is often closer to $m\log m$ in practice. Checking whether $R\in \mathcal{R}_\alpha(\mathbf{E})$ for sets $R$ not of the form $[r]$ can also be done in $O(m\log m)$ time following essentially the same reasoning as for sets of the form $[r]$, adapting the definition of $g$ as appropriate.

\section{Numerical illustrations} \label{sec: simulations}

We show the power improvement of eBH+ in comparison with eBH by depicting the size of the largest rejected set of eBH+ in the figures below in a simple z-testing problem with $\alpha=0.05$, taking inspiration from \citet{lee2024boosting}. We take $m=8$ and the set of non-nulls such that $\mu = (A, \ldots, A, 0, \ldots, 0) \in \mathbb{R}^8$, where the number of true null hypotheses is $\pi_0 m$, and where $A$ is the the mean of the non-null hypotheses. The covariance matrix is $\Sigma_{ij} = 0.8^{|i-j|}$ for any $i, j \in [m]$ for the positively dependent data and $\Sigma_{ij: i \neq j} =-0.8/(m-1)$ and ones on the diagonal for the negatively dependent data. We take $\pi_0 = 0.25$ and vary $A \in \{0.125, 0.25, 0.375, 0.5 \}$. After generating $100m$ data points $Z \sim N(\mu, \Sigma)$, we produce e-values by

\[
e_j(Z_j) = \exp(a_jZ_j - a_j^2/2),
\]
where we choose $a_j$ to be equal to the oracle value $A$ for any $j \in [m]$, to compare the best power for both eBH and eBH+. \citet{lee2024boosting} did experiments for different values and found that when $\mu$ is learned via a hold-out set, the performance is quite similar to the optimal value. Since we compare two methods based on the same e-values, there is no harm in choosing $a$ as we like. We average over $1000$ simulations. Additional experiments with $\pi_0 \in \{0.5, 0.75 \}$ and under independence of the e-values showed the same trends. The FDP is (far) below $0.05$ in all simulations.\footnote{R code for reproducing these figures can be found at \url{https://github.com/RianneDeHeide/ePartitioningPrinciple}.}

\begin{figure}
\centering
\begin{tikzpicture}[scale=0.7]
	\begin{axis}[
	title = Positive dependence,
	xmin = 0.1,
	xmax = 0.525,
	xtick ={0.125, 0.25, 0.375, 0.5},
	xticklabels ={0.125, 0.25, 0.375, 0.5},
	ymin = 0,
	ymax = 1,
	ylabel=Power,
	xlabel=Signal strength $A$,
        height=10cm,
	width=10cm,
		legend style={at={(0.5,-0.15)},
		anchor=north,legend columns=2}
	]
	\addplot[color=blue, very thick,  mark size=2pt, mark=o] coordinates {
(0.125,0.005166667) 
(0.25,0.367833333) 
(0.375,0.806833333) 
(0.5,0.962500000)
};
 \addlegendentry{eBH}
 	\addplot[color=red, very thick, mark size=2pt, mark=o] coordinates {
(0.125,0.01633333) 
(0.25,0.48683333) 
(0.375,0.86583333) 
(0.5,0.97450000)};
 \addlegendentry{eBH+}
	\end{axis}
\end{tikzpicture}~\begin{tikzpicture}[scale=0.7]
	\begin{axis}[
	title = Negative dependence,
	xmin = 0.1,
	xmax = 0.525,
	xtick ={0.125, 0.25, 0.375, 0.5},
	xticklabels ={0.125, 0.25, 0.375, 0.5},
	ymin = 0,
	ymax = 1,
	ylabel=Power,
	xlabel=Signal strength $A$,
        height=10cm,
	width=10cm,
		legend style={at={(0.5,-0.15)},
		anchor=north,legend columns=2}
	]
	\addplot[color=blue, very thick,  mark size=2pt, mark=o] coordinates {
(0.125,0.0003333333) 
(0.25, 0.3605000000) 
(0.375,0.8256666667) 
(0.5,0.9643333333)
};
 \addlegendentry{eBH} 
 	\addplot[color=red, very thick, mark size=2pt, mark=o] coordinates {
(0.125,0.0003333333) 
(0.25,0.4883333333) 
(0.375,0.8895000000) 
(0.5,0.9766666667)};
 \addlegendentry{eBH+} 
	\end{axis}
\end{tikzpicture}
\end{figure}

\section{Discussion}

The $e$-Partitioning Principle resolves the imparity that existed between FWER and FDP tail probability methods on one side and FDR control methods on the other. It is the direct translation of the Closure Principle of these methods to the FDR context. Therefore, it brings many of the boons of the Closure Principle into the realm of FDR. 

The $e$-Partitioning Principle can be used to propose new methods and to possibly improve existing ones. In this paper we have mostly focused on improving existing methods, because was best suitable to showcase the power of the principle. However, we think its greatest strength is actually in developing novel methods. To develop an FDR control method, a researcher only needs to decide how to aggregate the evidence against a partitioning or intersection hypothesis into an $e$-value. After making that choice, the only remaining problem to be solved is a computational one. Aggregating such evidence may often be naturally done in terms of $p$-values. In that case, it may seem an attractive option to convert such $p$-values to $e$-values and apply eBH+, but we have seen for BY+ and Su+ that it is better to aggregate $p$-values directly to compound $e$-values for partitioning hypotheses. When constructing compound $e$-values, assumptions on the joint distribution and on logical relationships between variables may be profitably used.

Aside from possibly improving methods, the $e$-Partitioning Principle brings post hoc flexibility to FDR control on a scale that was previously only known in FDP control. Rather than only a single rejection set, researchers have a choice of many rejection sets to choose from, and they may use all the data to decide which one to report, while still retaining FDR control. If signal is very strong, an attractive option is to switch to FWER control post hoc, or for methods in which the $e$-values do not depend on $\alpha$, to adjust the target FDR level to match the amount of signal in the data.

There are several ways in which this work may be extended. We assumed a finite number of hypotheses. We think that there are no substantial problems to extending the $e$-Partitioning Principle infinite testing problems, but we leave this to future work. This also holds for the online setting. Similarly, it remains to be investigated if and when stochastic rounding is beneficial in combination with the eBH+ procedure and other procedures generated using $e$-Partitioning.

There are many more FDR control methods than we have considered in this paper, but we believe that many of them can be improved using $e$-Partitioning. It will be a challenge to find polynomial time algorithms for some such improvements, but the literature on advanced shortcuts in closed testing may help in this case. For example, we believe that the branch and bound algorithm may be as useful in $e$-Partitioning as it is in closed testing \citep{vesely2023permutation}.

Finally, an important open problem is the question whether and how BH, by far the most popular FDR control method, can be uniformly improved using the $e$-Partitioning Principle, or at least generalized to allow rejection of more than one single set. The fact that BH is known to be inadmissible \citep{solari2017minimally} could be seen as an indication that such an improvement is possible, but so far we have not found a suitable suite of $e$-values that allows this.

\paragraph{Declaration of funding} Rianne de Heide's work was supported by NWO Veni grant number VI.Veni.222.018. 

\bibliographystyle{chicago} 
\bibliography{eBH}    
\end{document}